\newcommand{\bol}[1]{\mbox{\boldmath$#1$}}
\newcommand{\bSigma}{\bol{\Sigma}}
\newcommand{\bmu}{\bol{\mu}}
\newcommand{\bzeta}{\bol{\zeta}}
\newcommand{\bnu}{\bol{\nu}}
\newcommand{\bOmega}{\bol{\Omega}}
\newcommand{\bUpsilon}{\bol{\Upsilon}}
\newcommand{\bXi}{\bol{\Xi}}
\newcommand{\bm}{\mathbf{m}}
\newcommand{\bv}{\mathbf{v}}
\newcommand{\bb}{\mathbf{b}}
\newcommand{\bX}{\mathbf{X}}
\newcommand{\bNull}{\mathbf{0}}
\newcommand{\bOne}{\mathbf{1}}
\newcommand{\bx}{\mathbf{x}}
\newcommand{\by}{\mathbf{y}}
\newcommand{\bM}{\mathbf{M}}
\newcommand{\bI}{\mathbf{I}}
\newcommand{\bS}{\mathbf{S}}
\newcommand{\bL}{\mathbf{L}}
\newcommand{\bZ}{\mathbf{Z}}
\newcommand{\bl}{\mathbf{l}}
\newcommand{\bz}{\mathbf{z}}
\newcommand{\bU}{\mathbf{u}}
\newcommand{\bV}{\mathbf{V}}
\newcommand{\bw}{\mathbf{w}}
\newcommand{\bi}{\mathbf{1}}
\newtheorem{theorem}{Theorem}
\newtheorem{proposition}{Proposition}
\newtheorem{lemma}{Lemma}
\newfont{\tabfont}{cmr7 at 7pt}
\begin{document}

\begin{center}
\noindent\textbf{\large Bayesian Inference of the Multi-Period Optimal Portfolio for an Exponential Utility}\vspace*{1cm}

\textsc{David Bauder$^a$, Taras Bodnar$^{b,1}$\footnote{$^1$Corresponding Author: Taras Bodnar. E-Mail: taras.bodnar@math.su.se. Tel: +46 8 164562. Fax: +46 8 612 6717. This research was partly supported by the German Science Foundation (DFG) via the projects BO 3521/3-1 and SCHM 859/13-1 ''Bayesian Estimation of the Multi-Period Optimal Portfolio Weights and Risk Measures''.}, Nestor Parolya$^c$, Wolfgang Schmid$^d$} \vspace*{0.2cm}
{\it \footnotesize \\
$^a$ Department of Mathematics, Humboldt-University of Berlin, D-10099 Berlin, Germany\\
$^b$ Department of Mathematics, Stockholm University, SE-10691 Stockholm, Sweden\\
$^c$ Institute of Statistics, Leibniz University Hannover, D-30167 Hannover, Germany\\
$^d$ Department of Statistics, European University Viadrina, PO Box 1786,  15207 Frankfurt (Oder), Germany
}
\end{center}
\vspace{0.2cm}
 \begin{abstract}
 We consider the estimation of the multi-period optimal portfolio obtained by maximizing an exponential utility. Employing Jeffreys' non-informative prior and the conjugate informative prior, we derive stochastic representations for the optimal portfolio weights at each time point of portfolio reallocation. This provides a direct access not only to the posterior distribution of the portfolio weights but also to their point estimates together with uncertainties and their asymptotic distributions. Furthermore, we present the posterior predictive distribution for the investor's wealth at each time point of the investment period in terms of a stochastic representation for the future wealth realization. This in turn makes it possible to use quantile-based risk measures or to calculate the probability of default. We apply the suggested Bayesian approach to assess the uncertainty in the multi-period optimal portfolio by considering assets from the FTSE 100 in the weeks after the British referendum to leave the European Union. The behaviour of the novel portfolio estimation method in a precarious market situation is illustrated by calculating the predictive wealth, the risk associated with the holding portfolio, and the default probability in each period.
 \end{abstract}

\vspace{0.5cm}
\noindent Keywords: Multi-period optimal portfolio, Bayesian estimation, Stochastic representation, Posterior predictive distribution, Default probability, Credible sets\\[0.4cm]
\noindent JEL Classification: C11, C13, C44, C58, C63 \\


%
\newpage

\section{Introduction}

In portfolio theory, the mean-variance paradigm introduced by \cite{Markowitz1952} is still a popular reference for understanding the relationship between systematic risk, return and investment behaviour. A portfolio is determined here by using the asset expected returns and their covariances. As a starting point, \cite{Markowitz1952} was vastly extended in the following 70 years. While \cite{Markowitz1952} focused only on a single investment period, the multi-period solution was introduced in \cite{Markowitz1959}. Merton (1969) showed that the mean-variance multi-period setting in the continuous time case is equivalent to expected utility maximization for an exponential utility function. The multi-period optimal portfolio choice problems for different utility functions were considered by \cite{Mossin1968}, \cite{Samuelson1969}, \cite{EltonGruber1974}, \cite{BrandtSanta-Clara2006}, \cite{Basak2010}.

While these studies focus on the continuous time case, \cite{li2000optimal}, \cite{ccanakouglu2009portfolio}, \cite{bodnar2015closed,bodnar2015exact} presented the results in the discrete time case for the quadratic utility function and the exponential utility function. In particular, \cite{bodnar2015exact} derived an analytical expression for the multi-period optimal portfolio weights  under the assumption of non-tradable predictable variables and a VAR(1)-structure which are described as linear combinations of the precision matrix (inverse covariance matrix) and the expected return vector. While this setting allows for flexibility in building trading strategies under quite unrestrictive assumptions, there are still shortcomings: (i) since the parameters of the asset return distribution, namely the mean vector and the covariance matrix, are unknown quantities, the optimal portfolio weights cannot be constructed in practice and they are obtained by replacing the unknown parameter of the asset return distribution by the corresponding estimates; (ii) although the distributional properties of the estimated optimal portfolio weights and corresponding inference procedures were derived in a number of literature studies for the single-period investment strategies (see, e.g., \cite{gibbons1989test}, \cite{shanken1992estimation}, \cite{shanken2007estimating}, \cite{OkhrinSchmid2006}, \cite{BodnarSchmid2008, BodnarSchmid2011}, \cite{bodnar2009econometrical}), the problem with the overlapping estimation windows appears to be very crucial under the multi-period setting; (iii) due to the multivariate structure, the determination of the joint distribution of the estimated multi-period optimal portfolio weights is a challenging task.

To tackle all these three challenges, we opt for a Bayesian approach. The Bayesian approach is a well established method for building trading strategies in a single-period optimal portfolio choice problem, starting with \cite{Winkler1973} and \cite{WinklerBarry1975} and continued  until this day. For an overview, see, e.g., \cite{Brandt2010} where also Bayesian portfolio methods are discussed, or \cite{AvramovZhou2010}. As \cite{AvramovZhou2010} pointed out, the Bayesian setting is a realistic description of human decision making processes and information utilization. Both past events and experiences influence the beliefs of market participants at least up to a certain degree how an investment will develop. The investor beliefs are modeled via a prior distributions which represents the relevant information regarding the behaviour of the asset returns. While there is a plenty of possibilities to specify the prior, we focus on the non-informative diffuse prior and the informative conjugate prior (see, e.g., \cite{Zellner1971}, and \cite{gelman2014bayesian}) not only for computational reasons but mainly because of their popularity in the financial literature (c.f., \cite{Barry1974}, \cite{Brown1976}, \cite{KleinBawa1976}, \cite{FrostSavarino1986}, \cite{aguilar2000bayesian}, \cite{Rachev2008}, \cite{AvramovZhou2010}, \cite{Sekerke2015}, \cite{BodnarMazurOkhrin2016}). Furthermore, their application allows to derive the corresponding posterior distributions in the closed-form what enables us to access important risk measures and to construct credible sets.

The obtained posterior distributions of the optimal portfolio weights under both employed priors are presented in terms of their stochastic representations. A stochastic representation is a well established tool in computational statistics (c.f., \cite{givens2012computational}) and in the theory of elliptically contoured distributions (see, e.g. \cite{Gupta2013}) which was already used in Bayesian statistics by \cite{BodnarMazurOkhrin2016}. It turns out that the derived stochastic representations are very powerful, allowing us to access not only the posterior distribution of the multi-period optimal portfolio weights, but also to determine the predictive distribution for the wealth at each point of the holding period. Therefore, we are able to access the quantiles for the posterior predictive wealth distribution and can calculate the risk associated with the portfolio at every point over the lifetime of a portfolio, besides analytical Bayesian estimates for the weights together with their uncertainties. Besides these pleasing properties, the developed stochastic representations are highly efficient from a computational point of view since Markov-Chain Monte-Carlo methods are not longer needed. In addition to the derivation of these results, we illustrate this method and its properties on real data. We test the model in an exhaustive study using data from the FTSE 100, where the portfolios cover the time of Great Britains referendum to leave the European Union on 23.6.2016, more commonly regarded as ``Brexit'', where a slim majority of British voters decided to leave the European Union. Although this result was regarded as the less likely option in advance, it was regarded as the option with the least favourable effects on the British economy and should therefore have an effect on a portfolio covering this period.

The remaining paper is structured in the following way. In Section 2, we briefly review the solution of the multi-period optial portfolio choice problem with exponential utility derived in \cite{bodnar2015exact}. The stochastic representations for the optimal portfolio weights under both priors are presented in Theorems \ref{stochrep} and \ref{stochrep2} (Section 2.2), which are use to derive the corresponding Bayes estimates for the weights (Theorem \ref{th3}) together with their covariance matrix (Theorem \ref{th4}) as well as to prove the posterior asymptotic normality (Theorem \ref{th5}). In Section 2.3, we obtain the posterior predictive distribution for the wealth during the holding period which is provided in terms of stochastic representation in Theorem \ref{th6} under both employed priors. In Section 3, the suggested Bayesian approach is applied to the Brexit-data by calculating the asymptotic distributions for the optimal portfolio weights, determining the credible sets for the portfolio wealth and specifying the default probabilities at each time point. Section 4 summarizes the main results of the paper, while all technical proofs are moved to the appendix (Section 5).

\section{Bayesian analysis of multi-period optimal portfolios}
\subsection{Analytical solution of the multi-period optimization problem}
Let $\bX_t = (X_{t,1},X_{t,2},...,X_{t,k})^\top$ be a random vector of returns on $k$ assets taken at time point $t$. Throughout the paper we assume that the asset returns $\bX_1, \bX_2, ...$ are infinitely exchangeable and multivariate centered spherically symmetric. This assumption, in particular, implies (see, e.g., \citet[Proposition 4.6]{BernardoSmith2000}) that the asset returns are independently and identically distributed given mean vector $\bmu$ and covariance matrix $\bSigma$ with the conditional distribution given by $\bX_t|\bmu,\bSigma \sim \mathcal{N}_k (\bmu,\bSigma)$ ($k$-dimensional normal distribution with mean vector $\bmu$ and covariance matrix $\bSigma$). It is noted that the imposed assumption imply that neither the unconditional distribution of the asset returns is normal nor that they are independently distributed. Moroever, the unconditional distribution of the asset returns appears to be heavy-tailed which is usually observed for financial data.

The quantities $\bmu$ and $\bSigma$ denote the parameters of the asset returns distribution where $\bSigma$ is assumed to be a $k\times k$ dimensional positive definite matrix. We consider a multi-period portfolio choice problem with the allocation of initial wealth at time point $t=0$ and with the subsequent update of the portfolio structure at time points $t \in \{1,2,...,T\}$. Let $\bv_t = (v_{t,1}, ...,v_{t,k})^\top$ stand for the vector of portfolio weights determined at time $t$ and let $r_{f,t}$ be the return on the risk-free asset in period $t$. We assume that short-selling is allowed, i.e. the weights could also be negative. The vector $\bv_t$ specifies the structure of the portfolio related to the risky assets, whereas the part of the wealth equal to $1-\bi^\top \bv_t$ is invested into the risk-free asset where $\bi$ denotes the $k$-dimensional vector of ones. Then the investor's wealth in period $t$ is expressed as
\begin{eqnarray*}
	W_t &=& W_{t-1}(1+(1-\bOne^\top \bv_{t-1})r_{f,t} + \bv_{t-1} ^\top \bX_t)= W_{t-1}(1+r_{f,t} + \bv_{t-1} ^\top (\bX_t-r_{f,t}\bOne)).
\end{eqnarray*}

An investor seeks to maximize the utility of the final wealth, i.e. $U(W_T)$, where $U(x)=-\exp(-\gamma x)$ is the exponential utility function and the coefficient of absolute risk aversion, $\gamma>0$, determines the investor's attitude towards risk. The optimization problem is given by
\begin{equation} \label{opt_prob}
V(0,W_0) =	\max_{\{\bv_s\}_{s=0} ^{T-1}}\mathbb{E}_0[U(W_T)]
\end{equation}
where the maximum is taken with respect to all weights $\bv_0$,..., $\bv_{T-1}$ which specify the portfolio structure during the initial period of investment as well as during all consequent reallocations. The solution of \eqref{opt_prob} is derived in the recursive way starting from the last period by applying Bellman equations at $0$, $1$, ... $T-1$. The optimization problem at time point $T-t$ is then given by
\begin{eqnarray*}
		V(T-t,W_{T-t}) &=& \max_{\{\bv_s\}_{s=T-t}^{T-1}}\mathbb{E}_{T-t}\left[	\max_{\{\bv_s\}_{s=T-t+1} ^{T-1}}\mathbb{E}_{T-t+1}[U(W_T)]\right] \ \\
		&=& \max_{\bv_{T-t}} \mathbb{E}_{T-t} \left[V(T-t+1,W_{T-t} \left(r_{f,T-t}+\bw_{T-t+1}^{\top} (\bX_{T-t+1}-r_{f,T-t+1}\bOne) \right)) \right]
\end{eqnarray*}
subject to the terminal condition $U(W_T) = -\exp(-\gamma W_T)$ with $\bw_{T-t+1}$ as the optimal portfolio weights in period $T-t+1$. For details on this method, see e.g. \cite{pennacchi2008theory}, while \cite{bodnar2015exact} determine an analytical solution of \eqref{opt_prob} under the exponential utility. The latter results are summarized in Proposition \ref{portfolio-solution}.

\begin{proposition} \label{portfolio-solution}
Let $\bX_t$, $t=0,...,T$ be a sequence of conditionally independently and identically distributed vectors of $k$ risky assets with $\bX_t|\bmu,\bSigma \sim \mathcal{N}_k(\bmu,\bSigma)$. Let $\bSigma$ be positive definite. Then the optimal multi-period portfolio weights are given by
	\begin{eqnarray}\label{MPP_sol}
	\bw_{t}  &=& C_{t}\bSigma^{-1}(\bmu - r_{f,t+1}\bOne),\quad \text{with}\quad C_{t}=(\gamma W_{t} \prod_{i=t+2} ^T R_{f,i})^{-1}
	\end{eqnarray}
for $t=0,...,T-1$ where $R_{f,i}=1+r_{f,i}$ and  $\prod_{i=T+1} ^T R_{f,i} \equiv 1$.
\end{proposition}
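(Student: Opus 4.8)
The plan is to solve the nested problem \eqref{opt_prob} by backward induction on the Bellman recursion, carrying along the ansatz that at every reallocation date the value function keeps the exponential form
\begin{equation*}
V(T-t,W_{T-t}) = -\,a_t\,\xp\!\left(-\gamma\, b_t\, W_{T-t}\right),
\end{equation*}
where $a_t>0$ and $b_t>0$ are scalars that depend only on the risk-free returns and on $\bmu,\bSigma$, but neither on $W_{T-t}$ nor on the chosen weights. The terminal condition $U(W_T)=-\xp(-\gamma W_T)$ fixes the base case $a_0=b_0=1$. If this form can be propagated one step backwards and the optimizer read off at each stage, the representation \eqref{MPP_sol} follows, with the product $\prod_{i=t+2}^{T}R_{f,i}$ emerging as the accumulated value of the $b_t$-recursion.

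First I would carry out the inductive step. Inserting the wealth dynamics $W_{T-t+1}=W_{T-t}\left(R_{f,T-t+1}+\bv_{T-t}^\top(\bX_{T-t+1}-r_{f,T-t+1}\bOne)\right)$ into the inductive hypothesis and pulling the $\bX$-free factor out of $\E_{T-t}[\cdot]$, the only random object left is $\bv_{T-t}^\top(\bX_{T-t+1}-r_{f,T-t+1}\bOne)$, which, conditionally on $\bmu,\bSigma$, is univariate normal with mean $\bv_{T-t}^\top(\bmu-r_{f,T-t+1}\bOne)$ and variance $\bv_{T-t}^\top\bSigma\bv_{T-t}$. Evaluating the expectation through the normal moment generating function turns $V(T-t,W_{T-t})$ into $-a_{t-1}\xp(\cdot)$ whose exponent, up to $\bv_{T-t}$-independent terms, is the explicit quadratic form $-A\,\bv_{T-t}^\top(\bmu-r_{f,T-t+1}\bOne)+\tfrac12 A^2\,\bv_{T-t}^\top\bSigma\bv_{T-t}$, with $A=\gamma b_{t-1}W_{T-t}$.

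Because $U<0$ and $a_{t-1}>0$, maximizing $V$ amounts to minimizing this exponent; since $\bSigma\succ0$ the quadratic is strictly convex, so the first-order condition $A\,\bSigma\bv_{T-t}=\bmu-r_{f,T-t+1}\bOne$ has the unique solution $\bw_{T-t}=A^{-1}\bSigma^{-1}(\bmu-r_{f,T-t+1}\bOne)=(\gamma b_{t-1}W_{T-t})^{-1}\bSigma^{-1}(\bmu-r_{f,T-t+1}\bOne)$. Substituting this optimizer back collapses the $\bv$-dependent part of the exponent to $-\tfrac12(\bmu-r_{f,T-t+1}\bOne)^\top\bSigma^{-1}(\bmu-r_{f,T-t+1}\bOne)$, which is absorbed into the updated constant $a_t$, while the factor $R_{f,T-t+1}$ multiplying $W_{T-t}$ produces the recursion $b_t=b_{t-1}R_{f,T-t+1}$. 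This confirms the exponential ansatz and closes the induction; unwinding $b_{t-1}=\prod_{i=T-t+2}^{T}R_{f,i}$ and re-indexing through $s=T-t$ delivers exactly $C_s=\big(\gamma W_s\prod_{i=s+2}^{T}R_{f,i}\big)^{-1}$.

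I expect the only real care to be bookkeeping rather than any genuine obstacle: keeping the time indices and the empty-product convention $\prod_{i=T+1}^{T}R_{f,i}\equiv1$ consistent, and checking that the accumulated constant $a_t$ never enters the optimizer, so the stagewise maximization is unaffected by it. The one substantive point is verifying $b_t>0$ throughout, which holds because each $R_{f,i}=1+r_{f,i}>0$; this guarantees $A\neq0$ and keeps the strict-convexity argument yielding a unique minimizer valid at every stage.
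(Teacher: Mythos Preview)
Your backward-induction argument is correct and is the standard derivation: carry the exponential ansatz for the value function, use the normal moment generating function to integrate out $\bX_{T-t+1}$, minimize the resulting convex quadratic in the weights, and read off the recursion $b_t=b_{t-1}R_{f,T-t+1}$, which unwinds to the product $\prod_{i=s+2}^{T}R_{f,i}$. The bookkeeping you flag (indices, empty product, positivity of $b_t$) is handled correctly.

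As for comparison: the paper does not prove Proposition~\ref{portfolio-solution} at all; it merely quotes the result from \cite{bodnar2015exact} and moves on. So there is nothing in the present paper to compare against. Your proof is exactly the dynamic-programming route one would expect in that reference, and nothing more is needed here.
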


Although Proposition \ref{portfolio-solution} provides a simple solution of the multi-period portfolio choice problem, the formula \eqref{MPP_sol} cannot directly be applied in practice since $\bmu$ and $\bSigma$ are unknown parameters of the asset return distribution. As a result, these two quantities have to be estimated before the portfolio \eqref{MPP_sol} is constructed. However, the usage the estimated mean vector and the estimated covariance matrix instead of the population ones does not ensure that the estimated portfolio weights coincide with true ones. Then two main questions raise: (i) how strongly deviates the estimated portfolio from the population one? and (ii) is it reasonable to invest into the estimated portfolio? Both questions have to be treated by using statistical methods and are very closely connected to the distributional properties of the estimates constructed for $\bmu$ and $\bSigma$.

The traditional approach of estimating the portfolio weights relies on the methods from the conventional statistics where the sample mean vector and the sample covariance matrix are used. Let $\bx_{t-n+1}, ..., \bx_{t}$ be the observation vectors of asset returns which are considered as realizations of the corresponding random vectors $\bX_i$, $i=t-n+1,...,t$. Then the mean vector and the covariance matrix at time point $t$ are estimated by
\begin{equation}\label{sample_mean_cov}
\overline{\bx}_{t}=\frac{1}{n} \sum_{i=t-n+1}^{t} \bx_i \quad \text{and} \quad \bS_t=\frac{1}{n-1} \sum_{i=t-n+1}^{t}  (\bx_i-\overline{\bx}_{t})(\bx_i-\overline{\bx}_{t})^\top\,.
\end{equation}
The sample estimate of the multi-period optimal portfolio is obtained by replacing $\bmu$ and $\bSigma$ in \eqref{MPP_sol} by the corresponding estimates from \eqref{sample_mean_cov}. This leads to
\begin{equation}\label{sample_port}
\hat{\bw}_t=C_{t}\bS_t^{-1}(\overline{\bx}_{t} - r_{f,t+1}\bOne)\quad \text{with}\quad C_{t}=(\gamma W_{t} \prod_{i=t+2} ^T R_{f,i})^{-1} ~~ \text{for} ~~
t=0,...,T-1.
\end{equation}

Using the findings in \cite{BodnarOkhrin2011}, we obtain the density function, the moments and the stochastic representation of the sample multi-period optimal portfolio weights from the viewpoint of frequentist statistics. These results provide answers on the above two questions and allow us to characterize the distributional properties of each vector of weights $\hat{\bw}_t$ separately. On the other hand, they do not take into account the multi-period nature of the considered investment procedure. More precisely, it is not possible to provide the characterization of the whole multi-period optimal portfolio, since the overlapping samples are used and the dependence structure between the estimated portfolio weights becomes severe.

For that reason, we deal with the problem of estimating the multi-period optimal portfolio from the viewpoint of Bayesian statistics and consider the portfolio constructed by using \eqref{sample_port} as a benchmark portfolio without investigating its distributional properties in detail. In contrast to the methods of the frequentist statistics, the application of the Bayesian approach allows the sequential update of the available information which is a very important property needed for estimating the multi-period portfolio weights.

\subsection{Bayesian estimation of portfolio weights}

\noindent Let $\bx_{t,n}=(\bx_{t-n+1},...,\bx_t)$ denote the observation matrix at time point $t$ which consists of $n$ asset return vectors from $t-n+1$ to $t$. According to Bayes theorem, the beliefs regarding $\bmu$ and $\bSigma$ are updated in the presence of occurring data, yielding the posterior distribution $\pi(\bmu,\bSigma|\bx_{t,n})$ to be proportional to the product of the likelihood function $L(\bx_{t,n}|\bmu,\bSigma)$ and the prior distribution $\pi(\bmu,\bSigma)$. The posterior is, then, used to derive  Bayesian estimates for the multi-period optimal portfolio weights as well as their characteristics, like the covariance matrix and a credible region which is an analogue to a confidence region in the conventional statistics. The Bayes theorem states that
\begin{eqnarray*}
	\pi(\bmu,\bSigma|\bx_{t,n}) &\propto& L(\bx_{t,n}|\bmu,\bSigma) \pi(\bmu,\bSigma).
\end{eqnarray*}

The choice of the prior $\pi(\bmu,\bSigma)$ is an important step in the Bayesian decision process. Although the prior should reflect the investor's belief regarding the parameters of the asset return distribution, it also strongly affects the model's computational properties since it influences the accessibility of the posterior distribution. Several priors for the mean vector and covariance matrix of the asset returns have been suggested in literature (see, e.g., \cite{Barry1974}, \cite{Brown1976}, \cite{KleinBawa1976}, \cite{FrostSavarino1986}, \cite{Rachev2008}, \cite{AvramovZhou2010}, \cite{Sekerke2015}) with the recent paper of \cite{BodnarMazurOkhrin2016} summarizing these results. In the following, we choose Jeffreys' non-informative prior and a conjugate informative prior for both $\bmu$ and $\bSigma$. These two priors are widely used in the context of Bayesian inference of optimal portfolios.

The Jeffreys non-informative prior, also known as the diffuse prior, is given by
\begin{eqnarray} \label{noninformativecor1}
	\pi(\bmu,\bSigma) &\propto& |\bSigma| ^{-(k+1)/2}
\end{eqnarray}
while the cojugate prior is expressed as
\begin{eqnarray} \label{informativenucor1}
\bmu|\bSigma &\sim& \mathcal{N}_k\left(\bm_0,\frac{1}{r_0}\bSigma\right)\text{,}\\
 \bSigma &\sim& \mathcal{IW}_k(d_0,\bS_0),\label{informativesigmacor1}
\end{eqnarray}
where $\bm_0$, $r_0$, $d_0$, $\bS_0$ are additional model parameters known as hyperparameters. The symbol $\mathcal{IW}_k(d_0,\bS_0)$ denotes the inverse Wishart distribution with $d_0$ degrees of freedom and parameter matrix $\bS_0$. The prior mean $\bmu_0$ reflects our prior expectations about the expected asset returns, while $\bS_0$ presents in the model the prior beliefs about the covariance matrix. The other two hyperparameters $r_0$ and $d_0$ are known as precision parameters for $\bmu_0$ and $\bS_0$, respectively.
Note that the prior \eqref{informativenucor1}-\eqref{informativesigmacor1} corresponds to the well-known conjugate normal-inverse-Wishart model as discussed by, e.g., \cite{gelman2014bayesian}. In this case the posterior is accessible in an analytical form and moreover, has the same distribution as the prior with updated hyperparameters.

In Proposition \ref{prop2}, we present the marginal posterior of $\bmu$ as well as the conditional posterior of $\bSigma$ given $\bmu$. These results will be later used in the derivation of Bayesian estimates for the optimal portfolio weights. In the following the symbol $t_k(d,\mathbf{a},\mathbf{A})$ stands for the multivariate $k$-dimensional $t$-distribution with $d$ degrees of freedom, location vector $\mathbf{a}$ and dispersion matrix $\mathbf{A}$. In the case of $k=1$, $\mathbf{a}=0$, and $\mathbf{A}=1$, we use the notation $t_d$ to denote the standard univariate $t$-distribution with $d$ degrees of freedom.

\begin{proposition}\label{prop2}
Let $\bX_{t-n+1},...,\bX_t$ be conditionally independently distributed with $\bX_i|\bmu,\bSigma \sim \mathcal{N}_k(\bmu,\bSigma)$ for $i =t-n+1,...,t$ with $n>k$. Then:
\begin{enumerate}[(a)]
\item Under the diffuse prior (\ref{noninformativecor1}), the marginal posterior distribution of $\bmu$ is given by
	\begin{eqnarray*}
	\bmu|\bx_{t,n} &\sim& t_k\left(n-k, \overline{\bx}_{t,d}, \frac{1}{n(n-k)}\bS_{t,d} \right) ~~ \text{with} ~~\overline{\bx}_{t,d}=\overline{\bx}_{t}
~~ \text{and} ~~\bS_{t,d}=(n-1)\bS_t.
	\end{eqnarray*}
		
The conditional posterior distribution of $\bSigma$ given $\bmu$ is expressed as
	\begin{eqnarray*}
	\bSigma|\bmu,\bx_{t,n} &\sim& \mathcal{IW}_k(n+k+1,\bS_{t,d}^*(\bmu)) ~~ \text{with} ~~\bS_{t,d}^*(\bmu) = \bS_{t,d}+n(\bmu-\overline{\bx}_{t,d})(\bmu-\overline{\bx}_{t,d})^\top.
\end{eqnarray*}
\item Under the conjugate prior $(\ref{informativenucor1})$ and $(\ref{informativesigmacor1})$, the marginal posterior distribution of $\bmu$ is given by
	\begin{eqnarray*}
		\bmu|\bx_{t,n} &\sim& t_k\left(n+d_0-2k, \overline{\bx}_{t,c}, \frac{1}{(n+r_0)(n+d_0-2k)} \bS_{t,c}  \right) ~~\text{with}
	\end{eqnarray*}
	\begin{eqnarray*}
	\overline{\bx}_{t,c} = \frac{n\overline{\bx}_{t}+r_0\bm_0}{n+r_0}~~\text{and}~~ \bS_{t,c}=\bS_{t,d}+\bS_0+nr_0\frac{(\bm_0-\overline{\bx}_{t,c})(\bm_0-\overline{\bx}_{t,c})^\top}{n+r_0} .
	\end{eqnarray*}
The conditional posterior distribution of $\bSigma$ given $\bmu$ is expressed as
	\begin{eqnarray*}
		\bSigma|\bmu,\bx_{t,n} &\sim& IW_k(n+d_0+1,\bS^*_{t,c}(\bmu)) \quad \text{with}\\
	\bS^*_{t,c}(\bmu) &=& \bS_{t,c} + (n+r_0)(\bmu-\overline{\bx}_{t,c})(\bmu-\overline{\bx}_{t,c})^\top.
\end{eqnarray*}
\end{enumerate}
\end{proposition}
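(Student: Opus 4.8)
The plan is to establish both parts by the standard normal–inverse-Wishart conjugate-update machinery, organizing the computation so that the diffuse-prior case (a) emerges as a degenerate limit of the conjugate case (b). First I would write the likelihood in its canonical form. Since $\bX_i|\bmu,\bSigma\sim\mathcal{N}_k(\bmu,\bSigma)$ conditionally i.i.d., the joint likelihood is proportional to
\begin{equation*}
|\bSigma|^{-n/2}\exp\!\left(-\tfrac{1}{2}\tr\!\left[\bSigma^{-1}\sum_{i=t-n+1}^t(\bx_i-\bmu)(\bx_i-\bmu)^\top\right]\right).
\end{equation*}
The key algebraic step is the sum-of-squares decomposition
\begin{equation*}
\sum_{i=t-n+1}^t(\bx_i-\bmu)(\bx_i-\bmu)^\top = (n-1)\bS_t + n(\overline{\bx}_t-\bmu)(\overline{\bx}_t-\bmu)^\top,
\end{equation*}
which isolates the dependence on $\bmu$ into a single rank-one term and identifies $\bS_{t,d}=(n-1)\bS_t$ as the natural ``data'' scatter matrix.

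For part (b), I would multiply this likelihood by the conjugate prior \eqref{informativenucor1}–\eqref{informativesigmacor1}. The prior contributes a factor $|\bSigma|^{-(d_0+k+1)/2}\exp(-\tfrac12\tr[\bSigma^{-1}\bS_0])$ from the inverse-Wishart piece and $|\bSigma|^{-1/2}\exp(-\tfrac{r_0}{2}(\bmu-\bm_0)^\top\bSigma^{-1}(\bmu-\bm_0))$ from the conditional normal on $\bmu$. Collecting the two quadratic forms in $\bmu$ — the term $n(\overline{\bx}_t-\bmu)(\overline{\bx}_t-\bmu)^\top$ from the likelihood and $r_0(\bmu-\bm_0)(\bmu-\bm_0)^\top$ from the prior — I would complete the square in $\bmu$. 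This is the technical heart of the argument: the cross terms must combine to yield the posterior location $\overline{\bx}_{t,c}=(n\overline{\bx}_t+r_0\bm_0)/(n+r_0)$ together with a residual (independent of $\bmu$) that is precisely the rank-one correction $nr_0(\bm_0-\overline{\bx}_{t,c})(\bm_0-\overline{\bx}_{t,c})^\top/(n+r_0)$ appearing in $\bS_{t,c}$. Reading off the quadratic and the power of $|\bSigma|$ immediately gives the conditional posterior $\bSigma|\bmu,\bx_{t,n}\sim\mathcal{IW}_k(n+d_0+1,\bS^*_{t,c}(\bmu))$.

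To obtain the marginal posterior of $\bmu$ I would then integrate $\bSigma$ out of the joint posterior. Using the inverse-Wishart normalizing constant, integrating $|\bSigma|^{-(n+d_0+k+2)/2}\exp(-\tfrac12\tr[\bSigma^{-1}\bS^*_{t,c}(\bmu)])$ over the positive-definite cone yields a factor proportional to $|\bS^*_{t,c}(\bmu)|^{-(n+d_0+1)/2}$. Applying the matrix determinant identity $|\bS_{t,c}+(n+r_0)(\bmu-\overline{\bx}_{t,c})(\bmu-\overline{\bx}_{t,c})^\top|=|\bS_{t,c}|\,(1+(n+r_0)(\bmu-\overline{\bx}_{t,c})^\top\bS_{t,c}^{-1}(\bmu-\overline{\bx}_{t,c}))$ turns this into the multivariate-$t$ kernel, from which I read off the degrees of freedom $n+d_0-2k$ and the dispersion matrix $\frac{1}{(n+r_0)(n+d_0-2k)}\bS_{t,c}$. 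Part (a) follows by the same steps with the formal substitutions $r_0\to 0$, $d_0\to k+1$, $\bS_0\to\bNull$, $\bm_0$ irrelevant, under which $\overline{\bx}_{t,c}\to\overline{\bx}_t$ and $\bS_{t,c}\to\bS_{t,d}$; I would verify that the diffuse prior \eqref{noninformativecor1} indeed produces exactly these parameter values so that the limit is rigorous rather than merely formal. The main obstacle is bookkeeping: keeping the powers of $|\bSigma|$ consistent across the likelihood, both prior factors, and the integration constant so that the degrees-of-freedom counts come out exactly as stated, and ensuring the determinant/completion-of-square identities are applied with the correct constants.
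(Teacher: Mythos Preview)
Your proposal is correct and is precisely the standard normal--inverse-Wishart computation that the paper invokes: the paper does not write out a proof but simply refers to Chapter~3 of \cite{gelman2014bayesian} for the marginal of $\bmu$ and then obtains $\bSigma\mid\bmu$ from the joint posterior, which is the same calculation you outline (in the reverse order). One minor bookkeeping point to watch in your degenerate-limit reduction: under the paper's inverse-Wishart parameterization $\mathcal{IW}_k(d_0,\bS_0)$ has kernel $|\bSigma|^{-d_0/2}\exp(-\tfrac12\tr[\bS_0\bSigma^{-1}])$, so the formal match with the diffuse prior requires $d_0=k$ (not $k+1$), consistent with $n+d_0+1\to n+k+1$ and $n+d_0-2k\to n-k$.
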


The proof of Proposition \ref{prop2} follows from chapter 3 in \cite{gelman2014bayesian} who presented the expressions of the marginal posterior distributions of $\bmu$ under both the diffuse and the conjugate priors. Then, the results for the conditional posteriors of $\bSigma$ are obtained from the joint posterior distributions using the formulae for the marginal posteriors for $\bmu$. It is remarkable that although the results for the marginal posteriors for both $\bmu$ and $\bSigma$ are widely used in Bayesian inferences and the conditional posteriors for $\bmu$ given $\bSigma$ have been considered previously in literature (see, e.g., \cite{SunBerger2007}), the results for the conditional posteriors of $\bSigma$ given $\bmu$ have not been discussed nor used. Next, we show that the last finding allows to derive posterior distributions for functions which includes both $\bmu$ and $\bSigma$.

In order to assess the risk associated with estimating the optimal portfolio weights, we need to derive results about the posterior distribution of the weights presented in Proposition \ref{portfolio-solution} which are given as a product of the inverse covariance matrix and the mean vector. Next, we establish very useful stochastic representations for these weights, endowing the parameters with their diffuse and conjugate priors. The results are summarized in Theorem \ref{stochrep}, where the stochastic representations are derived for an arbitrary linear combination of optimal portfolio weights. These findings are later used for calculating the Bayesian estimates of the portfolio weights (Theorem \ref{th3}) and their covariance matrix (Theorem \ref{th4}). It is noted that the application of the stochastic representation to describe the distribution of random quantities has been used both in the conventional statistics (see, e.g., \cite{givens2012computational}, \cite{Gupta2013}) and the Bayesian statistics (c.f., \cite{BodnarMazurOkhrin2016}). Later on, the symbol ''$\stackrel{d}{=}$'' denotes the equality in distribution. The proof of Theorem \ref{stochrep} is presented in the appendix (Section 5).

\begin{theorem}\label{stochrep}
Let $\bL$ be a $p\times k$-dimensional matrix of constants. Then under the assumption of Proposition \ref{prop2} we get:
\begin{enumerate}[(a)]
\item Under the diffuse prior (\ref{noninformativecor1}), the stochastic representation of $\bL \bw_t$ is given by
\begin{eqnarray*}
\bL \bw_t &\stackrel{d}{=}& C_t \eta \bL \bS_{t,d}^*(\bmu)^{-1} (\bmu-r_{f,t+1})
+ C_t \sqrt{ \eta } \Big((\bmu-r_{f,t+1})^\top \bS_{t,d}^*(\bmu)^{-1} (\bmu-r_{f,t+1}) \cdot \bL \bS_{t,d}^*(\bmu)^{-1} \bL^\top \\
&-& \bL \bS_{t,d}^*(\bmu)^{-1} (\bmu-r_{f,t+1})(\bmu-r_{f,t+1})^\top \bS_{t,d}^*(\bmu)^{-1} \bL^\top \Big)^{1/2} \bz_0,
\end{eqnarray*}
where $\eta  \sim \chi^2_{n}$, $\bz_0 \sim \mathcal N _p (\mathbf 0 , \mathbf I_p)$, and $\bmu | \bx \sim t_k\left(n-k,\overline{\bx}_{t,d},\bS_{t,d}/(n(n-k))\right)$; moreover, $\eta, \mathbf z_0$ and $\bmu$ are mutually independent.

\item Under the conjugate prior $(\ref{informativenucor1})$ and $(\ref{informativesigmacor1})$, the stochastic representation of $\bL \bw_t$ is given by
\begin{eqnarray*}
\bL \bw_t &\stackrel{d}{=}& C_t \eta \bL \bS_{t,c}^*(\bmu)^{-1} (\bmu-r_{f,t+1})
+ C_t \sqrt{ \eta } \Big((\bmu-r_{f,t+1})^\top \bS_{t,c}^*(\bmu)^{-1} (\bmu-r_{f,t+1}) \cdot \bL \bS_{t,c}^*(\bmu)^{-1} \bL^\top \\
&-& \bL \bS_{t,c}^*(\bmu)^{-1} (\bmu-r_{f,t+1})(\bmu-r_{f,t+1})^\top \bS_{t,c}^*(\bmu)^{-1} \bL^\top \Big)^{1/2} \bz_0,
\end{eqnarray*}
where $\eta  \sim \chi^2_{n+d_0-k}$, $\bz_0 \sim \mathcal N _p (\mathbf 0 , \mathbf I_p)$, and $\bmu | \bx \sim t_k\left(n+d_0-2k,\overline{\bx}_{t,c},\bS_{t,c}/((n+r_0)(n+d_0-2k))\right)$; moreover, $\eta, \mathbf z_0$ and $\bmu$ are mutually independent.
\end{enumerate}
\end{theorem}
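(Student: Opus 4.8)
The plan is to use the hierarchical form of the posterior in Proposition \ref{prop2}: condition on $\bmu$, resolve the randomness of $\bSigma$, and only then integrate against the marginal $t$-posterior of $\bmu$. By Proposition \ref{portfolio-solution}, $\bL\bw_t = C_t\,\bL\bSigma^{-1}(\bmu-r_{f,t+1}\bOne)$, and once $\bmu$ is fixed both $C_t$ and the vector $\ba:=\bmu-r_{f,t+1}\bOne$ are constants, so everything reduces to the conditional law of the bilinear form $\bL\bSigma^{-1}\ba$ given $\bmu$. First I would convert the inverse-Wishart law into a Wishart law: the standard correspondence applied to $\bSigma\mid\bmu,\bx_{t,n}\sim\mathcal{IW}_k(n+k+1,\bS_{t,d}^*(\bmu))$ yields $\mathbf{W}:=\bSigma^{-1}\mid\bmu\sim\mathcal{W}_k(n,\bS_{t,d}^*(\bmu)^{-1})$, whose $n$ degrees of freedom are exactly those surfacing as $\eta\sim\chi^2_n$ in part (a); in the conjugate case the same step turns $\mathcal{IW}_k(n+d_0+1,\bS_{t,c}^*(\bmu))$ into $\mathcal{W}_k(n+d_0-k,\cdot)$, matching $\chi^2_{n+d_0-k}$ in part (b).

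The heart of the proof, and the step I expect to be the main obstacle, is a bilinear Wishart lemma for $\bL\mathbf{W}\ba$ with $\bL$, $\ba$ fixed and $\mathbf{W}\sim\mathcal{W}_k(m,\bV)$. I would establish it by writing $\mathbf{W}=\bZ\bZ^\top$ with $\bZ=(\bz_1,\dots,\bz_m)$ having i.i.d.\ columns $\bz_j\sim\mathcal{N}_k(\mathbf{0},\bV)$, so that $\mathbf{W}\ba=\sum_{j}(\ba^\top\bz_j)\bz_j$. Setting the scalars $u_j:=\ba^\top\bz_j$ and conditioning on them, each $\bz_j$ is Gaussian with mean $\tfrac{u_j}{\ba^\top\bV\ba}\bV\ba$ and covariance $\bV-\tfrac{\bV\ba\ba^\top\bV}{\ba^\top\bV\ba}$, and the columns remain independent. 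Summing the conditionally Gaussian terms, $\mathbf{W}\ba$ given $\{u_j\}$ is Gaussian with mean $\eta\,\bV\ba$ and covariance $\eta\big((\ba^\top\bV\ba)\bV-\bV\ba\ba^\top\bV\big)$, where $\eta:=\ba^\top\mathbf{W}\ba/(\ba^\top\bV\ba)=\sum_j u_j^2/(\ba^\top\bV\ba)\sim\chi^2_m$. The decisive observation is that this conditional law depends on the $u_j$ only through $\eta$, hence equals the law given $\eta$ alone; applying $\bL$ and extracting a standard Gaussian factor gives
\begin{eqnarray*}
\bL\mathbf{W}\ba &\stackrel{d}{=}& \eta\,\bL\bV\ba + \sqrt{\eta}\,\Big((\ba^\top\bV\ba)\,\bL\bV\bL^\top-\bL\bV\ba\ba^\top\bV\bL^\top\Big)^{1/2}\bz_0,
\end{eqnarray*}
with $\bz_0\sim\mathcal{N}_p(\mathbf{0},\bI_p)$ independent of $\eta$ and the square root taken of the (positive semidefinite) conditional covariance.

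It then remains to substitute $\bV=\bS_{t,d}^*(\bmu)^{-1}$, $\ba=\bmu-r_{f,t+1}\bOne$, $m=n$ and multiply by $C_t$, which reproduces the representation of part (a) conditionally on $\bmu$. For the marginalization I would invoke the usual mixing argument: since the laws $\eta\sim\chi^2_n$ and $\bz_0\sim\mathcal{N}_p(\mathbf{0},\bI_p)$ do not depend on $\bmu$, the unconditional representation holds with $\eta$, $\bz_0$ and $\bmu\sim t_k(n-k,\overline{\bx}_{t,d},\bS_{t,d}/(n(n-k)))$ mutually independent. Part (b) follows verbatim after replacing $\bS_{t,d}^*(\bmu)$ by $\bS_{t,c}^*(\bmu)$, the degrees of freedom $n$ by $n+d_0-k$, and the diffuse $t$-posterior of $\bmu$ by its conjugate analogue from Proposition \ref{prop2}. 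The genuine difficulty lies entirely in the bilinear lemma — specifically in checking that the conditional covariance collapses to a function of $\eta$ only and is positive semidefinite, so that its matrix square root is well defined.
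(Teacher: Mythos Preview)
Your proposal is correct and arrives at exactly the representation of Theorem \ref{stochrep}, but the route to the key bilinear Wishart identity differs from the paper's. The paper packages the argument as a general lemma: after the same inverse-Wishart to Wishart conversion you make, it stacks $\bL$ and $(\bmu-r_{f,t+1}\bOne)^\top$ into a single $(p{+}1)\times k$ matrix $\widetilde{\bM}$, so that $\bXi=\widetilde{\bM}\bSigma^{-1}\widetilde{\bM}^\top$ is a $(p{+}1)\times(p{+}1)$ Wishart, and then reads off the conditional normal law of the off-diagonal block $\bXi_{12}=\bL\bSigma^{-1}(\bmu-r_{f,t+1}\bOne)$ given the scalar corner $\Xi_{22}=(\bmu-r_{f,t+1}\bOne)^\top\bSigma^{-1}(\bmu-r_{f,t+1}\bOne)$ directly from block-Wishart theory (Theorems 3.2.8 and 3.2.10 of \cite{Muirhead1982}). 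Your approach instead rederives this from first principles via the sum $\mathbf{W}=\sum_j\bz_j\bz_j^\top$ and conditioning on the scalar projections $u_j=\ba^\top\bz_j$. Both are valid and yield the identical mean $\eta\,\bL\bV\ba$ and covariance $\eta\big((\ba^\top\bV\ba)\bL\bV\bL^\top-\bL\bV\ba\ba^\top\bV\bL^\top\big)$; the paper's version is shorter because it cites established Wishart results, while yours is more self-contained and makes the emergence of $\eta=\ba^\top\mathbf{W}\ba/(\ba^\top\bV\ba)\sim\chi^2_m$ and the residual Gaussian $\bz_0$ transparent without any block-Wishart machinery. The mutual independence of $(\eta,\bz_0)$ and $\bmu$ --- which you flag as needing care --- is handled identically in both proofs: the conditional law of $(\eta,\bz_0)$ given $\bmu$ is free of $\bmu$, hence unconditional.
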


The results of Theorem \ref{stochrep} show that in both cases, i.e., when the mean vector and the covariance matrix are endowed by the diffuse prior and the conjugate prior, the obtained stochastic representations are very similar and the posterior distributions of the multi-period optimal portfolio weights from Proposition \ref{portfolio-solution} can be described by three random variables which have standard univariate/multivariate distributions.

Another important application of Theorem \ref{stochrep} is that the results of this theorem also provide a hint how these distributions can be accessed in practice via simulations, namely by simulating samples from the $\chi^2$-distribution, the normal distribution, and the $t$-distribution. Although the derived stochastic representations have some nice computational properties in terms of speed, they are not computationally efficient. In the following theorem we derive further stochastic representations under both priors by applying the Sherman-Morrison-Woodbury formula on the inverse of the posterior scale matrices $\bS_{t,d}^*(\bmu)$ and  $\bS^*_{t,c}(\bmu)$. The proof of the theorem is provided in the appendix. Let $\mathcal{F}(d_1,d_2)$ denote the $F$-distribution with $d_1$ and $d_2$ degrees of freedom.
	
\begin{theorem}\label{stochrep2}
Under the assumption of Theorem \ref{stochrep} we get:
\begin{enumerate}[(a)]
\item Under the diffuse prior (\ref{noninformativecor1}), the stochastic representation of $\bL \bw_t$ is given by
\begin{eqnarray}\label{th2_eq1}
\bL \bw_t &\stackrel{d}{=}& C_t \eta \bL \bzeta_d +
C_t\sqrt{\eta}\left(\epsilon_d \bL \bUpsilon_d \bL^\top -\bL \bzeta_d \bzeta_d^\top \bL^\top\right)^{1/2} \bz_0,
 \end{eqnarray}
with
\begin{eqnarray*}
\epsilon_d &=& \epsilon_d (Q, \bU) = (\overline{\bx}_{t,d}-r_{f,t+1}\bOne)^\top \bS_{t,d}^{-1} (\overline{\bx}_{t,d}-r_{f,t+1}\bOne)\\
&+& \frac{2}{\sqrt{n}}\frac{\sqrt{k Q/(n-k)}}{1+k Q/(n-k)}(\overline{\bx}_{t,d}-r_{f,t+1}\bOne)^\top \bS_{t,d}^{-1/2} \bU\\
& +& \frac{1}{n}\frac{k Q/(n-k)}{1+ k Q/(n-k)}
-\frac{k Q/(n-k)}{1+k Q/(n-k)}\left( (\overline{\bx}_{t,d}-r_{f,t+1}\bOne)^\top \bS_{t,d}^{-1/2} \bU\right)^2,
\\
\bzeta_d&=&\bzeta_d (Q, \bU)= \bS_{t,d}^{-1} (\overline{\bx}_{t,d}-r_{f,t+1}\bOne)+ \frac{1}{\sqrt{n}}\frac{\sqrt{k  Q/(n-k)} }  { 1+ k Q/(n-k)}\bS_{t,d}^{-1/2} \bU\\
&-&\frac{k Q/(n-k) }  {   1+ k Q/(n-k)} \bS_{t,d}^{-1/2} \bU\bU^\top \bS_{t,d}^{-1/2} (\overline{\bx}_{t,d}-r_{f,t+1}\bOne),
\\
\bUpsilon_d &=&\bUpsilon_d (Q, \bU)=\bS_{t,d}^{-1}-
\frac{k Q/(n-k) }  {   1+ k Q/(n-k)} \bS_{t,d}^{-1/2} \bU\bU^\top \bS_{t,d}^{-1/2},
\end{eqnarray*}
where $\eta  \sim \chi^2_{n}$, $\bz_0 \sim \mathcal{N}_p (\mathbf{0} , \bI_p)$, $Q \sim \mathcal{F}(k,n-k)$, and $\bU$ uniformly distributed on the unit sphere in $\mathds{R}^k$; moreover, $\eta$, $\bz_0$, $Q$, and $\bU$ are mutually independent.

\item Under the conjugate prior $(\ref{informativenucor1})$ and $(\ref{informativesigmacor1})$, the stochastic representation of $\bL \bw_t$ is given by
\begin{eqnarray}\label{th2_eq2}
\bL \bw_t &\stackrel{d}{=}& C_t \eta \bL \bzeta_c +
 C_t\sqrt{\eta}\left(\epsilon_c \bL \bUpsilon_c \bL^\top -\bL \bzeta_c \bzeta_c^\top \bL^\top\right)^{1/2} \bz_0,
 \end{eqnarray}
with
\begin{eqnarray*}
\epsilon_c &=& \epsilon_d (Q, \bU) = (\overline{\bx}_{t,c}-r_{f,t+1}\bOne)^\top \bS_{t,d}^{-1} (\overline{\bx}_{t,c}-r_{f,t+1}\bOne)\\
&+& \frac{2}{\sqrt{n+r_0}}\frac{\sqrt{k Q/(n+d_0-2k)}}{1+k Q/(n+d_0-2k)}(\overline{\bx}_{t,c}-r_{f,t+1}\bOne)^\top \bS_{t,d}^{-1/2} \bU\\
 &+& \frac{1}{n+r_0}\frac{k Q/(n+d_0-2k)}{1+ k Q/(n+d_0-2k)}
-\frac{k Q/(n+d_0-2k)}{1+k Q/(n+d_0-2k)}\left( (\overline{\bx}_{t,c}-r_{f,t+1}\bOne)^\top \bS_{t,d}^{-1/2} \bU\right)^2,
\\
\bzeta_c&=&\bzeta_d (Q, \bU)= \bS_{t,c}^{-1} (\overline{\bx}_{t,c}-r_{f,t+1}\bOne)+ \frac{1}{\sqrt{n+r_0}}\frac{\sqrt{k  Q/(n+d_0-2k)} }  { 1+ k Q/(n+d_0-2k)}\bS_{t,c}^{-1/2} \bU\\
&-&\frac{k Q/(n+d_0-2k) }  {   1+ k Q/(n+d_0-2k)} \bS_{t,c}^{-1/2} \bU\bU^\top \bS_{t,c}^{-1/2} (\overline{\bx}_{t,c}-r_{f,t+1}\bOne),
\\
\bUpsilon_c &=&\bUpsilon_d (Q, \bU)=\bS_{t,c}^{-1}-
\frac{k Q/(n+d_0-2k) }  {   1+ k Q/(n+d_0-2k)} \bS_{t,c}^{-1/2} \bU\bU^\top \bS_{t,c}^{-1/2},
\end{eqnarray*}
where $\eta  \sim \chi^2_{n+d_0-k}$, $\bz_0 \sim \mathcal{N}_p (\mathbf{0} , \bI_p)$, $Q \sim \mathcal{F}(k,n+d_0-2k)$, and $\bU$ uniformly distributed on the unit sphere in $\mathds{R}^k$; moreover, $\eta$, $\bz_0$, $Q$, and $\bU$ are mutually independent.
\end{enumerate}
\end{theorem}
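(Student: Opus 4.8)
The plan is to substitute the stochastic representation of the posterior $t$-distribution of $\bmu$ directly into the representation obtained in Theorem \ref{stochrep}, thereby turning the implicit dependence on $\bmu$ inside the rank-one updated scale matrix $\bS_{t,d}^*(\bmu)=\bS_{t,d}+n(\bmu-\overline{\bx}_{t,d})(\bmu-\overline{\bx}_{t,d})^\top$ into an explicit rank-one update to which the Sherman--Morrison--Woodbury formula applies. All randomness will then reside in the four mutually independent variables $\eta$, $\bz_0$, $Q$, and $\bU$.

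First I would decompose the multivariate $t$. Since $\bmu\,|\,\bx\sim t_k(n-k,\overline{\bx}_{t,d},\bS_{t,d}/(n(n-k)))$, writing the $t$ through its Gaussian/chi-square mixture and splitting the Gaussian vector into its length and its direction gives
\[
\bmu-\overline{\bx}_{t,d}\ \stackrel{d}{=}\ \tfrac{1}{\sqrt{n}}\sqrt{kQ/(n-k)}\,\bS_{t,d}^{1/2}\bU,
\]
where $Q\sim\mathcal{F}(k,n-k)$ and $\bU$ is uniform on the unit sphere in $\mathds{R}^k$. The independence of $Q$ and $\bU$ follows from the independence of the squared length and the direction of a standard Gaussian vector, and both are independent of $(\eta,\bz_0)$ because in Theorem \ref{stochrep} $\bmu$ is independent of $(\eta,\bz_0)$; this is the point at which one must verify that substituting one stochastic representation into another preserves the joint law.

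Next, setting $c=kQ/(n-k)$ and using $\bU^\top\bU=1$, the update becomes $\bS_{t,d}^*(\bmu)\stackrel{d}{=}\bS_{t,d}^{1/2}(\bI_k+c\,\bU\bU^\top)\bS_{t,d}^{1/2}$, and Sherman--Morrison yields $\bS_{t,d}^*(\bmu)^{-1}\stackrel{d}{=}\bUpsilon_d$. A short computation, again exploiting $\bU^\top\bU=1$, then shows $\bS_{t,d}^*(\bmu)^{-1}(\bmu-r_{f,t+1}\bOne)\stackrel{d}{=}\bzeta_d$. This identity is the lever for the quadratic form: it reduces $(\bmu-r_{f,t+1}\bOne)^\top\bS_{t,d}^*(\bmu)^{-1}(\bmu-r_{f,t+1}\bOne)$ to $(\bmu-r_{f,t+1}\bOne)^\top\bzeta_d$, which after collecting terms equals $\epsilon_d$, and it simultaneously gives $\bL\bS_{t,d}^*(\bmu)^{-1}(\bmu-r_{f,t+1}\bOne)(\bmu-r_{f,t+1}\bOne)^\top\bS_{t,d}^*(\bmu)^{-1}\bL^\top=\bL\bzeta_d\bzeta_d^\top\bL^\top$. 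Inserting $\bUpsilon_d$, $\bzeta_d$, and $\epsilon_d$ into the two terms of Theorem \ref{stochrep}(a) produces exactly \eqref{th2_eq1}, the positive semidefiniteness of the matrix under the square root being inherited from Theorem \ref{stochrep}.

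The conjugate case is entirely analogous: one uses $\bmu\,|\,\bx\sim t_k(n+d_0-2k,\overline{\bx}_{t,c},\bS_{t,c}/((n+r_0)(n+d_0-2k)))$ together with $\bS_{t,c}^*(\bmu)=\bS_{t,c}+(n+r_0)(\bmu-\overline{\bx}_{t,c})(\bmu-\overline{\bx}_{t,c})^\top$, so that the prefactor $n$ is replaced by $n+r_0$ and the degrees of freedom $n-k$ by $n+d_0-2k$, while $\eta\sim\chi^2_{n+d_0-k}$ carries over from Theorem \ref{stochrep}(b); the same Sherman--Morrison step and form computations then give \eqref{th2_eq2}. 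I expect the only genuine obstacle to be the bookkeeping in the quadratic form --- keeping every cross term between $\overline{\bx}_{t,d}-r_{f,t+1}\bOne$ and $\bS_{t,d}^{1/2}\bU$ correctly weighted by $c/(1+c)$ --- which the identity $\bS_{t,d}^*(\bmu)^{-1}(\bmu-r_{f,t+1}\bOne)\stackrel{d}{=}\bzeta_d$ is precisely designed to tame.
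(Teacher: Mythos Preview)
Your proposal is correct and follows essentially the same route as the paper: the paper packages the argument as a generic Lemma (applied with the two parameter sets for the diffuse and conjugate cases), but the mathematical content is identical --- Sherman--Morrison on the rank-one update, the polar decomposition $\bmu-\overline{\bx}_{t,d}\stackrel{d}{=}n^{-1/2}\sqrt{kQ/(n-k)}\,\bS_{t,d}^{1/2}\bU$ of the posterior $t$-vector (the paper cites elliptical-distribution theory for the independence of $Q$ and $\bU$, where you invoke the Gaussian length/direction split, which is equivalent here), and the same term-by-term identification of $\bUpsilon_d$, $\bzeta_d$, $\epsilon_d$ before substituting into Theorem~\ref{stochrep}.
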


Theorem \ref{stochrep2} provides alternative stochastic representations of the optimal portfolio weights obtained under the diffuse prior and under the conjugate prior. Although more difficult mathematical expressions are present in Theorem \ref{stochrep2}, they are more computationally efficient than the ones provided in Theorem \ref{stochrep}. Namely, there is no need to calculate the inverse of the matrices $\bS_{t,d}^*(\bmu)$ and $\bS_{t,c}^*(\bmu)$ in each simulation run and instead, we only  calculate the inverse of the matrices $\bS_{t,d}$ and $\bS_{t,c}$ once for the whole simulation study. This property surely speeds up the simulation study considerably. Finally, we note that the realizations of the random vector $\bU$, which is uniformly distributed on the unit sphere in $\mathds{R}^k$, are obtained by drawing $\bz$ from the $k$-dimensional standard normal distribution and calculating $\bU=\bz/\sqrt{\bz^\top \bz}$.

The results of Theorem \ref{stochrep2} are used to derive Bayesian estimates for the weights of the multi-period optimal portfolio at the initial period of investment as well as at each time of reallocations. They are presented in Theorem \ref{th3}.

\begin{theorem}\label{th3}
    Under the assumption of Theorem \ref{stochrep}, we get

\begin{enumerate}[(a)]
\item Under the diffuse prior (\ref{noninformativecor1}), the Bayes estimate for the optimal portfolio weights at time point $t$ is given by
\begin{eqnarray*}
\hat{\bw}_{t,d}=\mathbb{E}(\bw_t|\bx_{t,n}) &=& C_t (n-1)\bS_{t,d}^{-1}(\overline{\bx}_{t,d}-r_{f,t+1}\bOne)\,.
\end{eqnarray*}

\item Under the conjugate prior $(\ref{informativenucor1})$ and $(\ref{informativesigmacor1})$, the Bayes estimate for the optimal portfolio weights at time point $t$ is given by
\begin{eqnarray*}
\hat{\bw}_{t,c}=\mathbb{E}(\bw_t|\bx_{t,n}) &=& C_t (n+d_0-k-1)\bS_{t,c}^{-1}(\overline{\bx}_{t,c}-r_{f,t+1}\bOne)\,.
\end{eqnarray*}
\end{enumerate}
\end{theorem}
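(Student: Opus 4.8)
The Bayes estimate is the posterior mean $\mathbb{E}(\bw_t\mid\bx_{t,n})$, so the plan is to integrate the stochastic representation supplied by Theorem \ref{stochrep}. In part (a), writing that representation as $\bL\bw_t\stackrel{d}{=}C_t\eta\,\bL\bS_{t,d}^*(\bmu)^{-1}(\bmu-r_{f,t+1}\bOne)+C_t\sqrt{\eta}\,(\cdots)^{1/2}\bz_0$ with $\eta$, $\bmu$, $\bz_0$ mutually independent and $\bz_0\sim\mathcal N_p(\bNull,\bI_p)$, the second summand has expectation $\bNull$ since $\mathbb{E}[\bz_0]=\bNull$ and $\bz_0$ is independent of the rest. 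The first summand factorizes as $C_t\,\mathbb{E}[\eta]\,\bL\,\mathbb{E}_{\bmu}[\bS_{t,d}^*(\bmu)^{-1}(\bmu-r_{f,t+1}\bOne)]$, where $\mathbb{E}[\eta]=n$ is the mean of the $\chi^2_n$ variable. Because $\bL$ is arbitrary, the entire task reduces to evaluating the vector expectation $\mathbb{E}_{\bmu}[\bS_{t,d}^*(\bmu)^{-1}(\bmu-r_{f,t+1}\bOne)]$ under $\bmu\sim t_k(n-k,\overline{\bx}_{t,d},\bS_{t,d}/(n(n-k)))$.

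For this core computation I would center the $t$-vector: set $\bv=\bmu-\overline{\bx}_{t,d}$ and $\bm=\overline{\bx}_{t,d}-r_{f,t+1}\bOne$, so that $\bS_{t,d}^*(\bmu)=\bS_{t,d}+n\,\bv\bv^\top$. Applying the Sherman--Morrison formula and expanding $\bS_{t,d}^*(\bmu)^{-1}(\bv+\bm)$ produces three kinds of terms: the constant $\bS_{t,d}^{-1}\bm$; a term that is odd in $\bv$, namely $\bS_{t,d}^{-1}\bv/(1+n\bv^\top\bS_{t,d}^{-1}\bv)$ with even denominator; and one even term $-n\,\bS_{t,d}^{-1}\,\bv\bv^\top\bS_{t,d}^{-1}\bm/(1+n\bv^\top\bS_{t,d}^{-1}\bv)$. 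Since the centered multivariate $t$ is symmetric ($\bv\stackrel{d}{=}-\bv$), the odd term integrates to zero, leaving $\mathbb{E}_{\bmu}[\bS_{t,d}^*(\bmu)^{-1}(\bmu-r_{f,t+1}\bOne)]=\bS_{t,d}^{-1}\bm-n\,\bS_{t,d}^{-1}\,\mathbb{E}[\bv\bv^\top/(1+n\bv^\top\bS_{t,d}^{-1}\bv)]\,\bS_{t,d}^{-1}\bm$.

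The remaining matrix expectation is the main obstacle, and I would handle it through the scale-mixture representation of the $t$: write $\bv=(n\xi)^{-1/2}\bS_{t,d}^{1/2}\bz$ with $\bz\sim\mathcal N_k(\bNull,\bI_k)$ and $\xi\sim\chi^2_{n-k}$ independent. Then $1+n\bv^\top\bS_{t,d}^{-1}\bv=(\xi+\bz^\top\bz)/\xi$, so the expectation collapses to $\tfrac1n\bS_{t,d}^{1/2}\,\mathbb{E}[\bz\bz^\top/(\xi+\bz^\top\bz)]\,\bS_{t,d}^{1/2}$. By rotational invariance of $\bz$ the inner expectation equals $c\bI_k$, and taking traces gives $ck=\mathbb{E}[\bz^\top\bz/(\xi+\bz^\top\bz)]$. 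Recognizing $\bz^\top\bz/(\bz^\top\bz+\xi)$ as a $\mathrm{Beta}(k/2,(n-k)/2)$ variable (a ratio of independent chi-squares) fixes its mean at $k/n$, hence $c=1/n$ and $\mathbb{E}[\bv\bv^\top/(1+n\bv^\top\bS_{t,d}^{-1}\bv)]=n^{-2}\bS_{t,d}$. Substituting back yields $\tfrac{n-1}{n}\bS_{t,d}^{-1}\bm$, and multiplying by $C_t\mathbb{E}[\eta]=C_t n$ gives part (a).

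For part (b) I would repeat the argument verbatim with the conjugate-prior quantities from Theorem \ref{stochrep}(b): replace the rank-one weight $n$ by $n+r_0$, the scale matrix by $\bS_{t,c}$, the centre by $\overline{\bx}_{t,c}$, the $t$ degrees of freedom by $n+d_0-2k$, and $\mathbb{E}[\eta]$ by $n+d_0-k$. The mixing variable is then $\xi\sim\chi^2_{n+d_0-2k}$, so the Beta mean becomes $k/(n+d_0-k)$, producing the contraction factor $\tfrac{n+d_0-k-1}{n+d_0-k}$; multiplying by $C_t(n+d_0-k)$ reproduces the stated estimate. The only genuinely delicate point throughout is the symmetry-plus-Beta evaluation of the matrix expectation; everything else is bookkeeping with Sherman--Morrison.
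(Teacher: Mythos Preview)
Your argument is correct. The paper's own proof proceeds from the refined stochastic representation of Theorem~\ref{stochrep2} rather than Theorem~\ref{stochrep}: there the Sherman--Morrison step has already been carried out, and $\bmu$ is replaced by the pair $(Q,\bU)$ with $Q\sim\mathcal F(k,n-k)$ and $\bU$ uniform on the unit sphere, so the expectation of $\bzeta_d$ is computed directly via $\mathbb E[\bU]=\bNull$, $\mathbb E[\bU\bU^\top]=k^{-1}\bI_k$, and the Beta identity $\mathbb E\!\left[\tfrac{kQ/(n-k)}{1+kQ/(n-k)}\right]=k/n$. Your route---starting from Theorem~\ref{stochrep}, applying Sherman--Morrison by hand, and evaluating the matrix expectation through the normal scale-mixture $\bv=(n\xi)^{-1/2}\bS_{t,d}^{1/2}\bz$---is an equivalent parameterization of the same computation: your $\bz/\sqrt{\bz^\top\bz}$ plays the role of $\bU$, your ratio $\bz^\top\bz/(\bz^\top\bz+\xi)$ is exactly the paper's Beta$\left(\tfrac{k}{2},\tfrac{n-k}{2}\right)$ variable, and the resulting $(n-1)/n$ contraction is identical. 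The only practical difference is that the paper offloads the Sherman--Morrison algebra to Theorem~\ref{stochrep2}, making the proof of Theorem~\ref{th3} itself very short, whereas you redo that algebra inside the proof.
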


The proof of the theorem is given in the appendix. It is interesting to note that the estimate for the optimal portfolio weights obtained under the diffuse prior coincides with the expression derived in Section 2.1 for their frequentist estimate since $\bS_{t,d}/(n-1)=\bS_t$.

Finally, we present the expressions for the covariance matrices of the optimal portfolio weights in Theorem \ref{th4} with the proof moved to the appendix. These formulas characterize the dependencies between the portfolio weight and also allow to access their Bayesian risk.

\begin{theorem}\label{th4}
    Under the assumption of Theorem \ref{stochrep}, we get:
\begin{enumerate}[(a)]
\item Under the diffuse prior (\ref{noninformativecor1}), the covariance matrix  of $\bw_t$ is given by
\begin{eqnarray*}
\mathbf{V}_{t,d}=\mathbb{V}ar(\bw_t|\bx_{t,n})&=& C_t^2\Bigg[ (n-1)\bS_{t,d}^{-1} (\overline{\bx}_{t,d}-r_{f,t+1}\bOne)(\overline{\bx}_{t,d}-r_{f,t+1}\bOne)^\top\bS_{t,d}^{-1}\\
&+& \left(\frac{n^2+k-2}{n(n+2)}+\frac{k-1}{k}b_d\right)\bS_{t,d}^{-1} \Bigg],
 \end{eqnarray*}
where $b_d=n (\overline{\bx}_{t,d}-r_{f,t+1}\bOne)^\top \bS_{t,d}^{-1} (\overline{\bx}_{t,d}-r_{f,t+1}\bOne)$.

\item Under the conjugate prior (\ref{informativenucor1}) and (\ref{informativesigmacor1}), the covariance matrix  of $\bw_t$ is given by
\begin{eqnarray*}
\mathbf{V}_{t,c}=\mathbb{V}ar(\bw_t|\bx_{t,n})&=& C_t^2\Bigg[ (n+d_0-k-1)\bS_{t,c}^{-1} (\overline{\bx}_{t,c}-r_{f,t+1}\bOne)(\overline{\bx}_{t,c}-r_{f,t+1}\bOne)^\top\bS_{t,c}^{-1}\\
&+& \left(\frac{(n+d_0-k)^2+k-2}{(n+r_0)(n+d_0-k+2)}+\frac{(n+d_0-k)(k-1)}{(n+r_0)k}b_c\right)\bS_{t,c}^{-1} \Bigg],
 \end{eqnarray*}
where $b_c=(n+r_0) (\overline{\bx}_{t,c}-r_{f,t+1}\bOne)^\top \bS_{t,c}^{-1} (\overline{\bx}_{t,c}-r_{f,t+1}\bOne)$.
\end{enumerate}\end{theorem}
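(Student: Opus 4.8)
The plan is to set $\bL=\bI_k$ in the stochastic representation \eqref{th2_eq1} of Theorem \ref{stochrep2}(a) and read off the first two posterior moments of $\bw_t$ directly, using the mutual independence of $\eta$, $\bz_0$, $Q$ and $\bU$. Writing the representation as $\bw_t\stackrel{d}{=}C_t\eta\bzeta_d+C_t\sqrt{\eta}\,\bB^{1/2}\bz_0$ with $\bB=\epsilon_d\bUpsilon_d-\bzeta_d\bzeta_d^\top$, the terms linear in $\bz_0$ vanish in expectation since $\mathbb{E}(\bz_0)=\bNull$, while $\mathbb{E}(\bz_0\bz_0^\top)=\bI_k$ collapses $\bB^{1/2}\bz_0\bz_0^\top\bB^{1/2}$ to $\bB$. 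With $\mathbb{E}(\eta)=n$ and $\mathbb{E}(\eta^2)=n(n+2)$ for $\eta\sim\chi^2_n$, and $\eta$ independent of $(Q,\bU)$, this gives
\[
\mathbb{E}(\bw_t|\bx_{t,n})=C_t n\,\mathbb{E}(\bzeta_d),\qquad
\mathbb{V}ar(\bw_t|\bx_{t,n})=C_t^2\Big[n(n+1)\,\mathbb{E}(\bzeta_d\bzeta_d^\top)+n\,\mathbb{E}(\epsilon_d\bUpsilon_d)-n^2\,\mathbb{E}(\bzeta_d)\mathbb{E}(\bzeta_d)^\top\Big],
\]
so the whole task reduces to evaluating $\mathbb{E}(\bzeta_d)$, $\mathbb{E}(\bzeta_d\bzeta_d^\top)$ and $\mathbb{E}(\epsilon_d\bUpsilon_d)$ under the law of $(Q,\bU)$.

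These three expectations are computed by separating the radial variable $Q$ from the directional variable $\bU$. For $\bU$ uniform on the unit sphere in $\mathds{R}^k$ I will use $\mathbb{E}(\bU)=\bNull$, $\mathbb{E}(\bU\bU^\top)=\bI_k/k$ and the fourth-order identity $\mathbb{E}(\bU\bU^\top\bM\bU\bU^\top)=\frac{1}{k(k+2)}\big(\bM+\bM^\top+\tr(\bM)\bI_k\big)$ for fixed $\bM$, together with the vanishing of all odd moments. Setting $h=\frac{kQ/(n-k)}{1+kQ/(n-k)}$, one checks $h\sim\mathrm{Beta}(k/2,(n-k)/2)$, hence $\mathbb{E}(h)=k/n$, $\mathbb{E}(h^2)=\frac{k(k+2)}{n(n+2)}$ and $\mathbb{E}\big(h(1-h)\big)=\frac{k(n-k)}{n(n+2)}$. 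Because $\bU$ enters the $\sqrt{Q}$-terms of $\bzeta_d$ and $\epsilon_d$ with an odd power, those contributions drop out under $\mathbb{E}$; in particular $\mathbb{E}(\bzeta_d)=\big(1-\tfrac1n\big)\bS_{t,d}^{-1}(\overline{\bx}_{t,d}-r_{f,t+1}\bOne)$, which reproduces the Bayes estimate of Theorem \ref{th3}(a) via $C_t n\,\mathbb{E}(\bzeta_d)=C_t(n-1)\bS_{t,d}^{-1}(\overline{\bx}_{t,d}-r_{f,t+1}\bOne)$ and provides a useful consistency check.

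The remaining effort is the term-by-term assembly. Expanding $\bzeta_d\bzeta_d^\top$, only the five even-order contributions survive; the quadratic $\mathbb{E}(\epsilon_d\bUpsilon_d)$ additionally requires the fourth-moment identity for the product of its $\bU$-quadratic pieces. Collecting everything, the rank-one contributions combine into $(n-1)\bS_{t,d}^{-1}(\overline{\bx}_{t,d}-r_{f,t+1}\bOne)(\overline{\bx}_{t,d}-r_{f,t+1}\bOne)^\top\bS_{t,d}^{-1}$ and the isotropic contributions into a scalar multiple of $\bS_{t,d}^{-1}$, the scalar being $\frac{n^2+k-2}{n(n+2)}+\frac{k-1}{k}b_d$ with $b_d=n(\overline{\bx}_{t,d}-r_{f,t+1}\bOne)^\top\bS_{t,d}^{-1}(\overline{\bx}_{t,d}-r_{f,t+1}\bOne)$, which is precisely part (a). I expect this bookkeeping to be the main obstacle: matching the coefficient $\frac{n^2+k-2}{n(n+2)}$ forces the $\chi^2$, Beta and sphere moments to combine exactly, and sign or normalisation errors propagate easily through the fourth-order terms.

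Part (b) follows by the identical argument applied to representation \eqref{th2_eq2}, with the understanding that the single ``sample size'' $n$ of the diffuse case now splits into three surrogates: the prior-precision scaling $n+r_0$ in the $\bU$-linear and $\tfrac1n$ terms of $\epsilon_c$ and $\bzeta_c$, the $\chi^2_{n+d_0-k}$ degrees of freedom controlling $\mathbb{E}(\eta)=n+d_0-k$ and $\mathbb{E}(\eta^2)=(n+d_0-k)(n+d_0-k+2)$, and the $\mathcal{F}(k,n+d_0-2k)$ law giving $h\sim\mathrm{Beta}(k/2,(n+d_0-2k)/2)$ so that $\mathbb{E}(h)=k/(n+d_0-k)$. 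Substituting these and $\overline{\bx}_{t,c},\bS_{t,c}$ in place of $\overline{\bx}_{t,d},\bS_{t,d}$ reproduces the stated covariance matrix $\mathbf{V}_{t,c}$, again with a consistency check against Theorem \ref{th3}(b).
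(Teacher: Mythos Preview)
Your proposal is correct and follows essentially the same route as the paper. The paper packages the computation in a separate Lemma~3, where it evaluates the scalar second moment $\mathbb{E}\big((\bb^\top\bOmega^{-1}(\bnu-\mathbf{a}))^2\big)$ for an arbitrary vector $\bb$ and then recovers the covariance matrix by polarization, whereas you set $\bL=\bI_k$ and compute the matrix moments $\mathbb{E}(\bzeta_d\bzeta_d^\top)$ and $\mathbb{E}(\epsilon_d\bUpsilon_d)$ directly; the paper also obtains the fourth spherical moment via an Isserlis-theorem trick (inflating $\bU$ by an independent $\chi^2_k$ factor to a Gaussian) rather than your closed formula $\mathbb{E}(\bU\bU^\top\bM\bU\bU^\top)=\tfrac{1}{k(k+2)}(\bM+\bM^\top+\tr(\bM)\bI_k)$, but these are cosmetic differences and the underlying moment calculations are identical.
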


The results of Theorems \ref{th3} and \ref{th4} provide the first two moments of optimal portfolio weights and, consequently, they characterize their mean values, variances, and correlations. Although different formulas are obtained under the diffuse prior and under the conjugate prior, when the sample size increases the difference between the corresponding expressions becomes negligible.


More general results are provided in Theorem \ref{th5} where it is shown that $\bw_t$ converge to the same asymptotic normal distribution under the diffuse prior and under the conjugate prior.

\begin{theorem}\label{th5}
	Under the assumption of Theorem \ref{stochrep}, it holds that
\begin{eqnarray*}
\sqrt{n}(\bw_t-\hat{\bw}_t)|\bx_{t,n} &\stackrel{d}{\longrightarrow}& \mathcal{N}\Bigg(0,
C_t^2\Bigg[\breve{\bS}_{t}^{-1} (\breve{\bx}_{t}-r_{f,t+1}\bOne)(\breve{\bx}_{t}-r_{f,t+1}\bOne)^\top\breve{\bS}_{t}^{-1}\\
&+& \left(1+\frac{k-1}{k} (\breve{\bx}_{t}-r_{f,t+1}\bOne)^\top \breve{\bS}_{t}^{-1} (\breve{\bx}_{t}-r_{f,t+1}\bOne)  \right)\breve{\bS}_{t}^{-1}\Bigg]\Bigg)
\end{eqnarray*}
as $n \longrightarrow \infty$ under both the diffuse prior and the conjugate prior where
\[ \breve{\bx}_t\equiv \lim_{n \longrightarrow \infty} \overline{\bx}_{t,d}=\lim_{n \longrightarrow \infty} \overline{\bx}_{t,c}
 ~~\text{and}~~
 \breve{\bS}_t\equiv \lim_{n \longrightarrow \infty} \frac{\bS_{t,d}}{n-1}=\lim_{n \longrightarrow \infty} \frac{\bS_{t,c}}{n+r_0}\]
and
\[\hat{\bw}_t\equiv \lim_{n \longrightarrow \infty} \hat{\bw}_{t,d} = \lim_{n \longrightarrow \infty} \hat{\bw}_{t,c}= C_t \breve{\bS}_{t}^{-1}(\breve{\bx}_{t}-r_{f,t+1}\bOne).\]

\end{theorem}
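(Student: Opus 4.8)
The plan is to read the limit off the stochastic representation of Theorem \ref{stochrep} with $\bL=\bI_k$, treating the three mutually independent ingredients $\eta$, $\bmu$ and $\bz_0$ separately and centring at the posterior mean $\hat{\bw}_{t,d}$ of Theorem \ref{th3} (which is the natural mean-zero centring and which I substitute for $\hat{\bw}_t$ once the sample moments are replaced by their limits $\breve{\bx}_t,\breve{\bS}_t$). First I would record the elementary asymptotics of each ingredient: for $\eta\sim\chi^2_{n}$ one has $\eta/n\to 1$ in probability and $(\eta-n)/\sqrt{n}\stackrel{d}{\longrightarrow}\mathcal N(0,2)$; the multivariate $t$-posterior $\bmu|\bx\sim t_k(n-k,\overline{\bx}_{t,d},\bS_{t,d}/(n(n-k)))$ is asymptotically Gaussian, with $\sqrt{n}(\bmu-\overline{\bx}_{t,d})\stackrel{d}{\longrightarrow}\mathcal N(\mathbf 0,\breve{\bS}_t)$, since its dispersion is $\bS_{t,d}/(n(n-k))\sim\breve{\bS}_t/n$ and its degrees of freedom diverge; and $\bz_0$ is a fixed standard Gaussian. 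Along the realised data sequence I would use $\overline{\bx}_{t,d}\to\breve{\bx}_t$ and $\bS_{t,d}/(n-1)\to\breve{\bS}_t$, and write $\breve{g}_t=\breve{\bx}_t-r_{f,t+1}\bOne$.

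The second step is to linearise the $\bmu$-dependent scale matrix. Writing $\mathbf d=\bmu-\overline{\bx}_{t,d}=O_p(n^{-1/2})$ and applying the Sherman--Morrison formula to $\bS_{t,d}^*(\bmu)=\bS_{t,d}+n\,\mathbf d\mathbf d^\top$ gives $\bS_{t,d}^*(\bmu)^{-1}=\bS_{t,d}^{-1}+O_p(n^{-2})$, so the rank-one correction is negligible after multiplication by $\eta\sim n$ and scaling by $\sqrt{n}$. Substituting this, together with the limits above, into the representation expresses $\sqrt{n}(\bw_t-\hat{\bw}_{t,d})$ as a sum of a term proportional to $(\eta-n)/\sqrt{n}$ along the direction $\breve{\bS}_t^{-1}\breve{g}_t$, a term linear in $\sqrt{n}(\bmu-\overline{\bx}_{t,d})$, and the term in which $\bz_0$ is multiplied by the symmetric matrix square-root factor, whose argument converges to a deterministic positive semi-definite limit. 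Continuity of the symmetric square root on the positive semi-definite cone, the mutual independence of the three ingredients, and Slutsky's theorem then yield a Gaussian limit for the shape of the distribution.

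To pin down the covariance I would avoid reassembling the three pieces and instead invoke Theorem \ref{th4}: substituting $\bS_{t,d}=(n-1)\bS_t$ and $b_d=n(\overline{\bx}_{t,d}-r_{f,t+1}\bOne)^\top\bS_{t,d}^{-1}(\overline{\bx}_{t,d}-r_{f,t+1}\bOne)$ and letting $n\to\infty$ gives $n\,\mathbf V_{t,d}\to C_t^2\big[\breve{\bS}_t^{-1}\breve{g}_t\breve{g}_t^\top\breve{\bS}_t^{-1}+(1+\tfrac{k-1}{k}\,\breve{g}_t^\top\breve{\bS}_t^{-1}\breve{g}_t)\breve{\bS}_t^{-1}\big]$, which is exactly the stated covariance $V$. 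The bridge joining the two halves is the fact that convergence in distribution to a Gaussian shape together with convergence of the second moments to $V$ forces the limit to be $\mathcal N(\mathbf 0,V)$; this requires uniform integrability of $\|\sqrt{n}(\bw_t-\hat{\bw}_{t,d})\|^2$, which I would obtain from the uniformly bounded $(2+\delta)$-posterior moments supplied for large $n$ by the $t$- and $\chi^2$-tails.

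Finally, the conjugate case is handled by repeating the argument with $\eta\sim\chi^2_{n+d_0-k}$, the $t_k(n+d_0-2k,\overline{\bx}_{t,c},\cdot)$ posterior, and $\bS_{t,c},\overline{\bx}_{t,c}$ in place of their diffuse counterparts. Since the hyperparameters $r_0,d_0,\bm_0,\bS_0$ are fixed, one has $\overline{\bx}_{t,c}\to\breve{\bx}_t$, $\bS_{t,c}/(n+r_0)\to\breve{\bS}_t$ and $(n+d_0-k)/n\to 1$, so every prior-dependent term is asymptotically negligible and the limit coincides with the diffuse one, which establishes the claimed common asymptotic law. I expect the main obstacle to be the second step: the scale matrix $\bS_{t,d}^*(\bmu)$ couples the two dominant random sources $\eta$ and $\bmu$, so the crux is controlling this coupling through the Sherman--Morrison expansion and verifying that both the matrix square root and the uniform-integrability bound behave well uniformly in $n$, since it is precisely this control that upgrades ``Gaussian shape plus moment limit'' into the asserted convergence.
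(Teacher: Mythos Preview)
Your proposal is correct and follows essentially the same line as the paper: start from the stochastic representation of Theorem~\ref{stochrep}, use the separate asymptotics of the three independent ingredients $\eta$, $\bmu$, $\bz_0$, and read the asymptotic covariance off Theorem~\ref{th4}. The paper packages the linearisation more compactly: it works with an arbitrary linear functional $\bl^\top\bw_t$, records the joint CLT
\[
\sqrt{n}\Bigl(\bigl(\eta/n,\;\bz_0/\sqrt{n},\;\bmu\bigr)-\bigl(1,\;\mathbf 0,\;\overline{\bx}_{t,d}\bigr)\Bigr)\stackrel{d}{\longrightarrow}\mathcal N\!\left(\mathbf 0,\mathrm{diag}(2,\bI_p,\breve{\bS}_t)\right),
\]
applies the multivariate delta method in one stroke, and then concludes for the full vector by Cram\'er--Wold. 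Your Sherman--Morrison expansion and Slutsky argument reproduce exactly what the delta method does internally, so the coupling between $\eta$ and $\bmu$ through $\bS_{t,d}^*(\bmu)$ that you flag as the main obstacle is already handled by the Jacobian computation. One simplification: the uniform-integrability detour is unnecessary. Your own decomposition already delivers $\sqrt{n}(\bw_t-\hat{\bw}_{t,d})$ as a sum of three \emph{independent} asymptotically Gaussian pieces with explicit deterministic coefficients, so the limiting covariance is simply the sum of the three individual covariances; Theorem~\ref{th4} is then a computational shortcut to evaluate that sum, not a logically separate step that needs a moment-convergence bridge. This is precisely how the paper uses it as well.
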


The proof of Theorem \ref{th5} is given in the appendix. Its results are in line with the Bernstein-von Mises theorem (c.f., \cite{BernardoSmith2000}) which shows under some regularity conditions that the posterior distribution converges to the normal one independently of the prior used when the sample size tends to infinity. In practice, the asymptotic covariance matrix of $\bw_t$ is approximated by using $\overline{\bx}_t$ and $\bS_t$ instead of $\breve{\bx}_t$ and $\breve{\bS}_t$.

\subsection{Posterior predictive distribution}
In this section we derive the posterior predictive distribution of the wealth at time point $t+1$, $\widehat{W}_{t+1}$, given the observable data $\bx_{t,n}$ under the diffuse prior \eqref{noninformativecor1} and the conjugate prior(\ref{informativenucor1}) and (\ref{informativesigmacor1}) for the given vector of portfolio weights $\bv_{t}$ and the current wealth $W_t$. Namely, the aim is to derive the posterior predictive distribution of
\begin{equation}\label{wealth_t+1}
  W_{t+1}=W_{t}(1+r_{f,t} + \bv_{t}^\top (\bX_{t+1}-r_{f,t+1}))
\end{equation}
given information provided by the observation matrix $\bx_{t,n}$, i.e.
\begin{eqnarray*}
  f_{\hat{W}_{t+1}}(w|\bx_{t,n}) &=& \int_{\bmu,\bSigma} f_{\hat{W}_{t+1}}(w|\bmu,\bSigma,\bx_{t,n}) \pi(\bmu,\bSigma|\bx_{t,n})\mbox{d}\bmu \mbox{d}\bSigma,
\end{eqnarray*}
where $\pi(\bmu,\bSigma|\bx_{t,n})$ is the posterior distribution obtained under the diffuse prior or the conjugate prior. The symbol $\hat{W}_{t+1}$ denotes a random variable whose distribution coincides with the posterior predictive distribution of the wealth calculated at time point $t+1$.

In Theorem \ref{th6} we present the stochastic representations of the posterior predictive distribution of $\hat{W}_{t+1}$ with the proof given in the appendix. The symbol $t_{d}$ stands for the standard univariate $t$-distribution with $d$ degrees of freedom.

\begin{theorem}\label{th6}
Under the assumption of Theorem \ref{stochrep} we get:
\begin{enumerate}[(a)]
\item Under the diffuse prior (\ref{noninformativecor1}), the stochastic representation of the posterior predictive distribution of $W_{t+1}$ is given by
\begin{eqnarray*}
\widehat{W}_{t+1} &\stackrel{d}{=}& W_t\Bigg(1+r_{f,t+1} + \bv_{t}^\top (\overline{\bx}_{t,d}-r_{f,t+1})\\
&+&\sqrt{\bv_{t}^\top \bS_{t,d} \bv_t}\left(\frac{t_1}{\sqrt{n(n-k)}}+ \sqrt{1+\frac{t_1^2}{n-k}}\frac{t_2}{\sqrt{n-k+1}}\right)\Bigg)
\end{eqnarray*}
where $t_1$ and $t_2$ are independent with $t_1\sim t_{n-k}$ and $t_2\sim t_{n-k+1}$.

\item Under the conjugate prior $(\ref{informativenucor1})$ and $(\ref{informativesigmacor1})$, the stochastic representation of the posterior predictive distribution of $W_{t+1}$ is given by
\begin{eqnarray*}
\widehat{W}_{t+1} &\stackrel{d}{=}& W_t\Bigg(1+r_{f,t+1} + \bv_{t}^\top (\overline{\bx}_{t,c}-r_{f,t+1})\\
&+&\sqrt{\bv_{t}^\top \bS_{t,c} \bv_t}\left(\frac{t_1}{\sqrt{(n+r_0)(n+d_0-2k)}}+ \sqrt{1+\frac{t_1^2}{n+d_0-2k}}\frac{t_2}{\sqrt{n+d_0-2k+1}}\right)\Bigg)\,,
\end{eqnarray*}
where $t_1$ and $t_2$ are independent with $t_1\sim t_{n+d_0-2k}$ and $t_2\sim t_{n+d_0-2k+1}$.
\end{enumerate}
\end{theorem}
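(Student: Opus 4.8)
The plan is to observe that in \eqref{wealth_t+1} everything except the future return vector $\bX_{t+1}$ is fixed, so the posterior predictive law of $\widehat{W}_{t+1}$ is an affine image of the single scalar $Y=\bv_t^\top\bX_{t+1}$ under the predictive distribution. It therefore suffices to produce a stochastic representation of $Y$ and substitute it into $\widehat{W}_{t+1}=W_t(1+r_{f,t+1}+Y-r_{f,t+1}\bv_t^\top\bOne)$. I would build this representation hierarchically, exploiting exactly the two-layer structure of the posterior in Proposition \ref{prop2}: first integrate out $\bSigma$ for fixed $\bmu$, then integrate out $\bmu$. Conditionally on $\bmu$ and $\bSigma$ one has $Y\sim\mathcal N(\bv_t^\top\bmu,\bv_t^\top\bSigma\bv_t)$, so writing $Y=\bv_t^\top\bmu+\sqrt{\bv_t^\top\bSigma\bv_t}\,Z_0$ with $Z_0\sim\mathcal N(0,1)$ reduces the first layer to understanding the quadratic form $\bv_t^\top\bSigma\bv_t$.

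The key lemma is the standard distribution of a quadratic form of an inverse-Wishart matrix: if $\bSigma\sim\mathcal{IW}_k(d,\bS)$, then for fixed $\bv_t$ one has $\bv_t^\top\bSigma\bv_t\stackrel{d}{=}\bv_t^\top\bS\bv_t/\xi$, where $\xi$ is chi-squared with degrees of freedom determined by $d$ and $k$ alone (here $d-2k$). Applying this to the conditional posterior $\bSigma|\bmu\sim\mathcal{IW}_k(n+k+1,\bS_{t,d}^*(\bmu))$ from Proposition \ref{prop2}(a) gives $\xi\sim\chi^2_{n-k+1}$, and since those degrees of freedom do not depend on $\bmu$ the chi-squared factor can be coupled independently of $\bmu$. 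Combining $Z_0$ and $\xi$ into $t_2=Z_0/\sqrt{\xi/(n-k+1)}\sim t_{n-k+1}$ yields $Y\stackrel{d}{=}\bv_t^\top\bmu+\sqrt{\bv_t^\top\bS_{t,d}^*(\bmu)\bv_t}\,t_2/\sqrt{n-k+1}$, with $t_2$ independent of $\bmu$.

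For the second layer I would use that the marginal posterior of $\bmu$ is $t_k(n-k,\overline{\bx}_{t,d},\bS_{t,d}/(n(n-k)))$, so its projection satisfies $\bv_t^\top(\bmu-\overline{\bx}_{t,d})\stackrel{d}{=}\sqrt{\bv_t^\top\bS_{t,d}\bv_t/(n(n-k))}\,t_1$ with $t_1\sim t_{n-k}$. Crucially, both quantities surviving from the first layer depend on $\bmu$ only through this one scalar projection: the location term is $\bv_t^\top\bmu=\bv_t^\top\overline{\bx}_{t,d}+\sqrt{\bv_t^\top\bS_{t,d}\bv_t/(n(n-k))}\,t_1$, while the definition $\bS_{t,d}^*(\bmu)=\bS_{t,d}+n(\bmu-\overline{\bx}_{t,d})(\bmu-\overline{\bx}_{t,d})^\top$ gives $\bv_t^\top\bS_{t,d}^*(\bmu)\bv_t\stackrel{d}{=}\bv_t^\top\bS_{t,d}\bv_t(1+t_1^2/(n-k))$. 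Substituting both in terms of the same $t_1$ and factoring out $\sqrt{\bv_t^\top\bS_{t,d}\bv_t}$ reproduces exactly the bracketed term of part (a), with $t_1\sim t_{n-k}$ and $t_2\sim t_{n-k+1}$ independent by construction. Part (b) is identical after replacing $(\overline{\bx}_{t,d},\bS_{t,d},n,n-k)$ by $(\overline{\bx}_{t,c},\bS_{t,c},n+r_0,n+d_0-2k)$ and using $\bSigma|\bmu\sim\mathcal{IW}_k(n+d_0+1,\bS_{t,c}^*(\bmu))$, which feeds $\xi\sim\chi^2_{n+d_0-2k+1}$ into the same computation.

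The main obstacle I anticipate is bookkeeping the degrees of freedom correctly: one must match the inverse-Wishart convention implicit in Proposition \ref{prop2} to the chi-squared degrees of freedom in the quadratic-form lemma, and then verify that the single scalar $t_1$ may legitimately carry both the location shift $\bv_t^\top\bmu$ and the scale inflation $\bv_t^\top\bS_{t,d}^*(\bmu)\bv_t$ without destroying the independence $t_1\perp t_2$. That independence is safe because $t_2$ is manufactured from $Z_0$ and $\xi$, which live on the conditional layer given $\bmu$ and whose law does not depend on $\bmu$; the only residual $\bmu$-dependence enters through the coefficient $\sqrt{1+t_1^2/(n-k)}$, precisely as in the statement.
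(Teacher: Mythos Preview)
Your proposal is correct and follows essentially the same route as the paper's own proof: condition on $(\bmu,\bSigma)$ to write the wealth as normal, replace $\bv_t^\top\bSigma\bv_t$ by $\bv_t^\top\bS^*(\bmu)\bv_t/\xi$ via the inverse-Wishart quadratic-form identity (the paper cites Muirhead, Theorem 3.2.13), fuse $Z_0$ and $\xi$ into $t_2$, and then project the marginal $t$-posterior of $\bmu$ onto $\bv_t$ to obtain $t_1$, noting that $\bv_t^\top\bS^*(\bmu)\bv_t=\bv_t^\top\bS\bv_t\,(1+t_1^2/d_y)$. Your discussion of the $t_1\perp t_2$ independence and of the degrees-of-freedom bookkeeping is a bit more explicit than the paper's, but the argument is the same.
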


The results in Theorem \ref{th6} are very useful in analyzing the behavior of the investor's wealth during the whole investment period as well as at the final point $T$. It allows: (i) to calculate with which probability the investor can become bankrupt during the whole investment horizon at each time point; (ii) to construct the prediction intervals for the wealths at each time point of the investment period; (iii) to determine risk measures, like Value-at-Risk (VaR) and conditional VaR (CVaR), of the investment strategy during all times of the future reallocation; (iv) to specify a region where the final wealth belongs to with a high probability. We illustrate these results based on real data in Section 3.

\section{Empirical study}
\subsection{Data description}

The data used in the empirical study consist of weekly returns on twelve stocks from the FTSE 100, namely Barclays, Glaxo Smith Kline, Standard Life,  Marks and Spencer, Burberry Group plc, HSBC, LLoyds Banking, NEXT plc, Rolls-Royce Holding, The Sage Group, Tesco plc and Unilever which represent a variety of branches with strong international activities.  Since the parameters of the asset returns are not usually constant over a longer period of time, we disregard the use of monthly data which are closer to the normal distribution and choose weekly returns as a compromise between actuality and the assumption of conditional normality. As a risk-free rate we use the weekly returns on the three-months US treasury bill.

\begin{figure}[t]
	\centering
	\includegraphics[width=15cm]{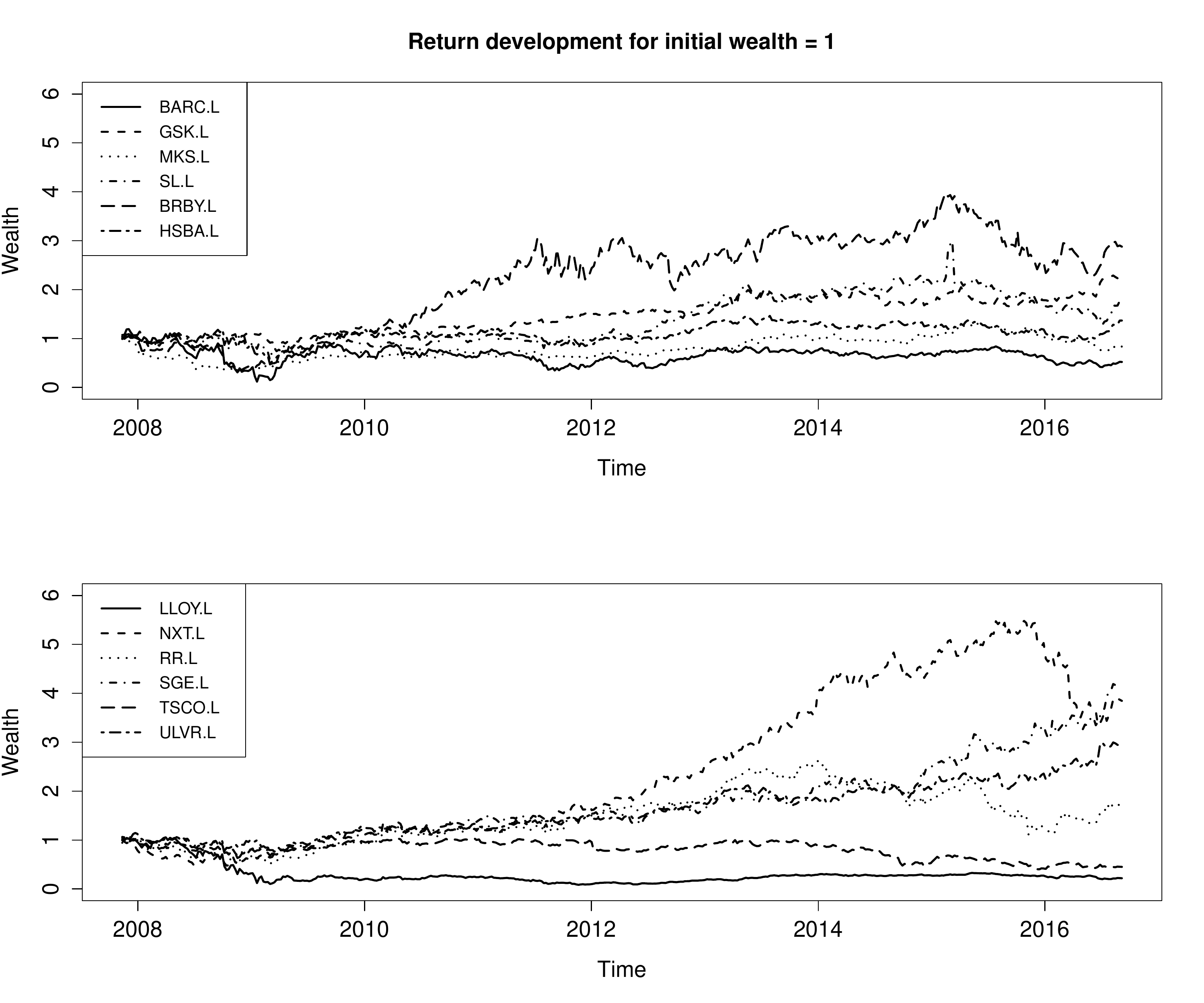}
	\caption{\footnotesize Development of the gross returns for the twelve assets considered in the portfolio.\label{Returns}}
\end{figure}

The portfolio weights are estimated using a rolling window estimation with different sample sizes of $n$ $\in$ $\{52,78, 104,130\}$ corresponding to one year up to two and a half years of weekly data in steps of six months. The portfolio runs from 6.6.2016 until 5.9.2016 ($T=13$) covering a precarious market situation due to Great Britains referendum to leave the European Union on 23.06.2016. The gross returns of these assets are given in Figure \ref{Returns}. Especially Barclays suffered a loss of nearly 10 $\%$ in the week after the Brexit decision but also suffered losses in the weeks prior to the Brexit. HSBC announced that significant parts of her banking operations is moved from the City of London to different locations as a direct reaction to the referendum and it is rumoured that Lloyds seeks for a German banking licence as a consequence to the Brexit. The returns of the Marks and Spencer share were not as affected by the Brexit but the company reported that consumer confidence would be weakened in the days prior to the Brexit. This also implies price uncertainty for domestic consumer products due to a decline of the pound losing almost a fifth of his value against the dollar after the Brexit vote, which was emphasized for example by Tesco and Unilever. But Glaxo Smith Kline and Standard Life seem to be unaffected by the Brexit decision, yielding even positive returns. Rolls Royce, after all, faced significant losses in the beginning of 2016 and is hit by the Brexit vote severely, since they need to hedge a huge amount of British pounds against currency fluctuations because most of the contracts in aerospace are conducted in dollars.

\subsection{Posterior distribution of the weights}

Due to Theorem \ref{stochrep2} it is possible to access the posterior distribution of the weights directly. The weights can be sampled using the following procedure:
\begin{enumerate}
	\item Generate independently
	\begin{itemize}
		\item $\eta$ $\sim$ $\chi_n ^2$ under the diffuse prior or $\eta$ $\sim$ $\chi_{n+d_0-k} ^2$ under the conjugate prior
		\item $\bz_0$ $\sim$ $\mathcal{N}_p (\bNull,\bI_p)$
		\item $Q$ $\sim$ $\mathcal{F}(k,n-k)$ under the diffuse prior or $Q$ $\sim$ $\mathcal{F}(k,n+d_0-2k)$ under the conjugate prior
		\item $\bZ$ $\sim$ $\mathcal{N}_k(\bNull, \bI_k)$ $\rightarrowtail$ $\bU = \bZ/\sqrt{\bZ^\prime\bZ}$
	\end{itemize}
	\item Compute the vector of portfolio weights by using the stochastic representation (\ref{th2_eq1}) for the diffuse prior or (\ref{th2_eq2}) for the conjugate prior.
	\item Repeat steps (1) and (2) $B$ times.
\end{enumerate}

The implementation of this simulation procedure leads to sequences of optimal portfolio weights of size B at each time point of the investment period, from which using their sample distribution we approximate the posterior distributions of the weights as well as their important quantiles from these distributions and the credible sets for portfolio weights. It is remarkable that all computations can easily be done by generating samples from the well known univariate distributions and high numerical precision could be achieved by choosing the corresponding value of $B$.

\begin{figure}[t]
	\centering
	\includegraphics[width=17cm]{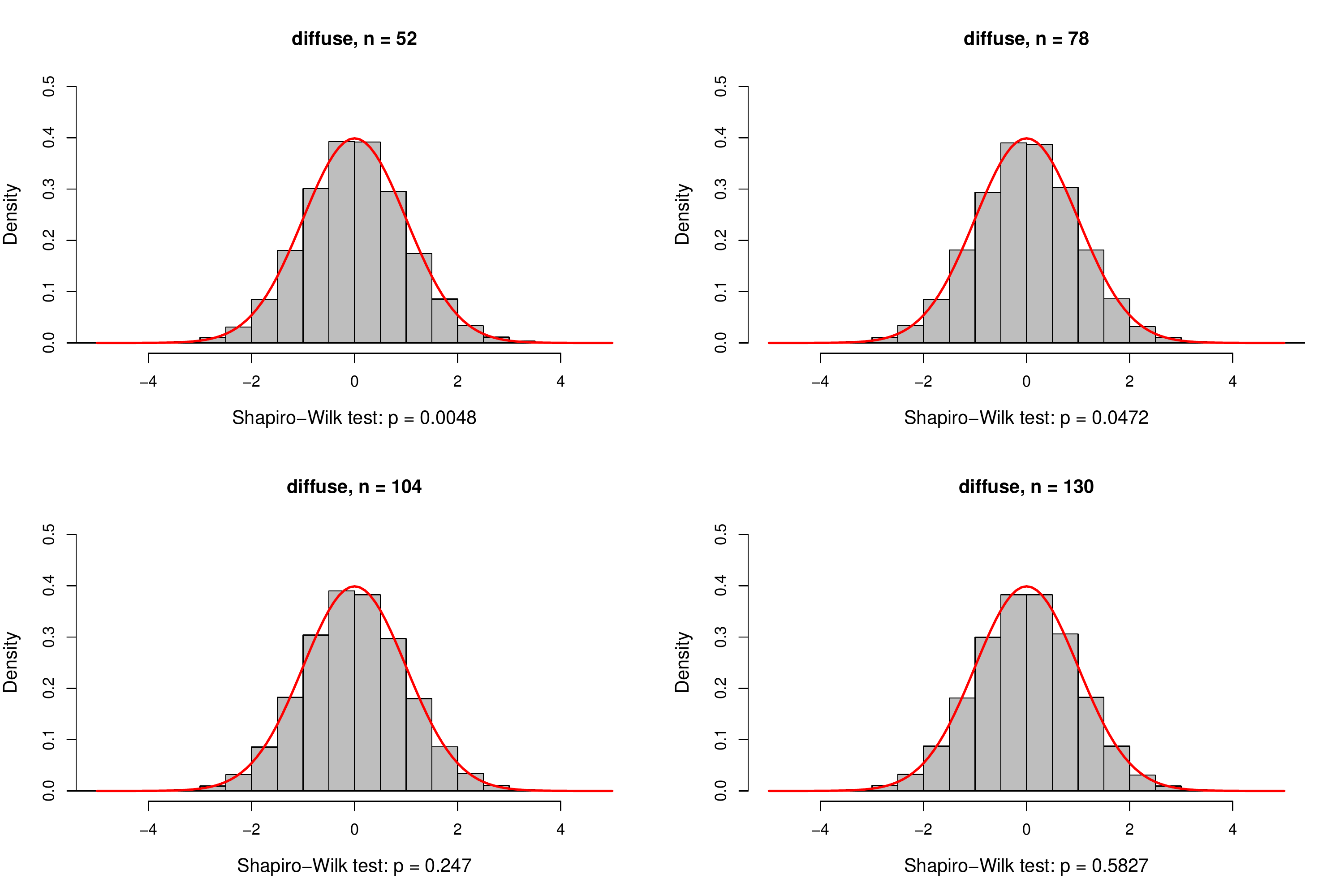}
	\caption{\footnotesize Histograms of the standardized Glaxo Smith Kline (GSK) weight for the diffuse prior. The hypothesis that the weight is normally distributed can not be rejected for common significance levels when the sample size is larger than $n=100$. \label{dhist}}
\end{figure}

\begin{figure}[t]
	\centering
	\includegraphics[width=17cm]{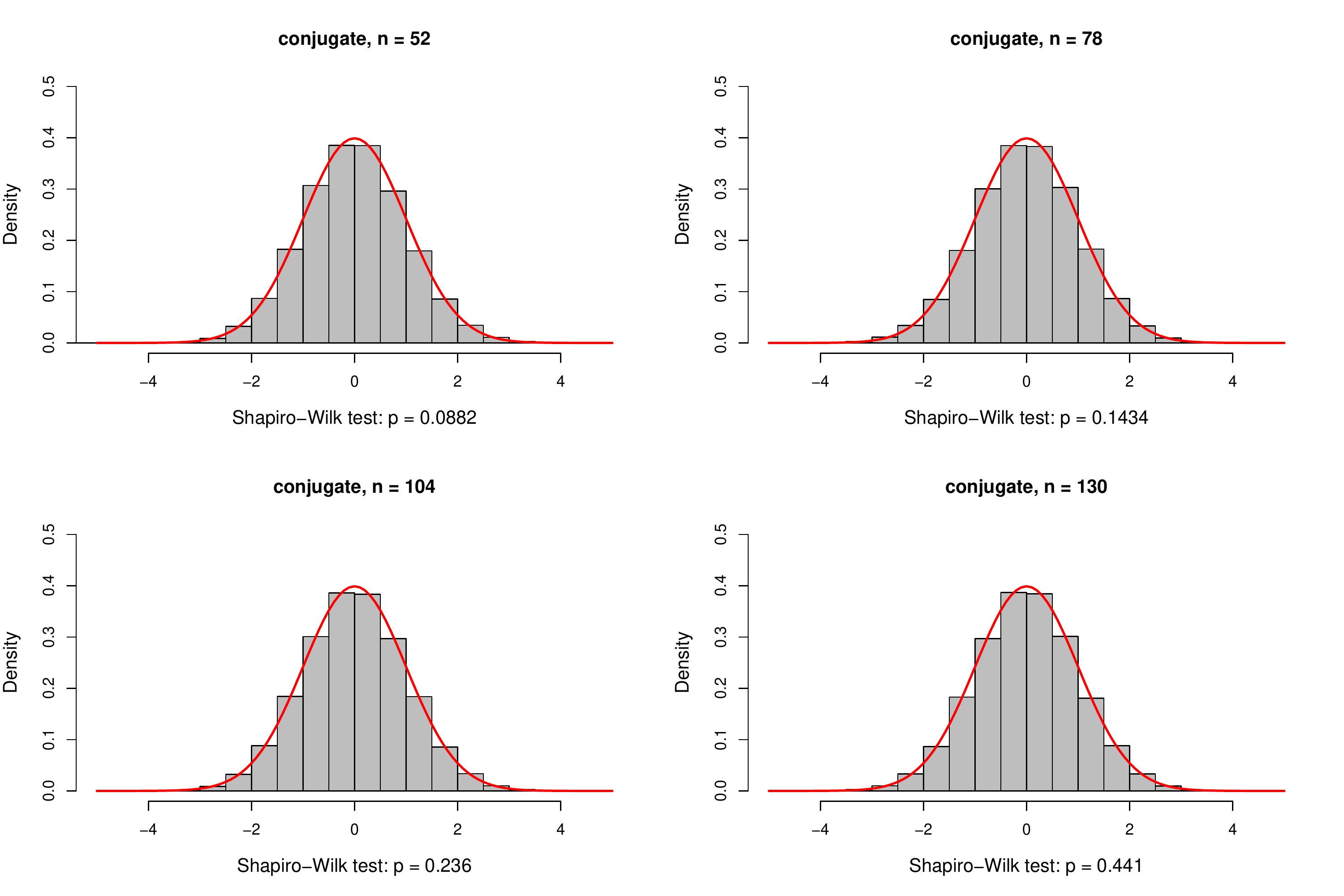}
	\caption{\footnotesize Histograms of the standardized Glaxo Smith Kline (GSK) weight for the conjugate prior. The hypothesis that the weight is normally distributed can not be rejected for common significance levels in the case of all considered sample sizes. \label{chist}}
\end{figure}

In Figures \ref{dhist} and \ref{chist}, we analyze the finite-sample behavior of the results presented in Theorem \ref{th5}. Namely, we investigate the speed of convergence of the posterior distribution of the optimal portfolio weights to the corresponding asymptotic distribution which is a normal distribution according to Theorem \ref{th5} for both priors. The choice of the hyperparameters $\bm_0$ and $\bS_0$ in the case of the conjugate prior are of particular interest. According to the Bayesian paradigm, $\bm_0$ and $\bS_0$ represent the correct belief of the decision maker. In practice, however, there are several data driven methods how to replace $\bm_0$ and $\bS_0$ by data-dependent values $\hat{\bm}_0$ and $\hat{\bS}_0$. We make use of the empirical Bayes approach (see Section 5.2 in the appendix for the derivation of the formulas) which is applied to the weekly data of the returns on the corresponding assets directly from the time period before the empirical counterparts of the portfolio weights are estimated, always with the same time window. Namely, they are given by
\begin{eqnarray*}
	\hat{\bm}_0 &=& \overline{\bx}_{n-t} ~~ \text{and} ~~ 	\hat{\bS}_0 = \frac{(d_0 - k - 1)(n-1)}{n} \bS_{n-t}
\end{eqnarray*}
with the derivation moved to the appendix (Section 5.2). The prior parameters for $t>1$ are estimated using a rolling window starting in the corresponding period. We set $d_0$ equal to the number of observations in the pres-sample period, i.e., $d_0=n$.

We set $B=10^5$ for draws from the stochastic representations of Theorem \ref{stochrep2} and compare the standardized weight of Glaxo Smith Kline (GSK) calculated for the priod $T-1$ in the case of several sample sizes $n$ $\in$ $\{52,78, 104,130\}$. The corresponding histograms are given in Figure \ref{dhist} for the diffuse prior and in Figure \ref{chist} for the conjugate prior. In both figures we also present the p-values of the Shapiro-Wilk test, indicating if the standardized weights follow a standard normal distribution. This hypothesis is rejected for $n=52$ and $n=78$ in the case of the diffuse prior for a common significance level of 5 $\%$ but it cannot be rejected at this level for larger sample sizes. Stronger results are obtained in the case of the conjugate prior, where the null hypothesis cannot be rejected at 5 $\%$ level for all considered sample sizes. We therefore conclude that the approximate distribution of Theorem \ref{th5} works reasonably well.

\subsection{Wealth development and credibility intervals}

Since the main purpose of investing is making money, investors are therefore interested in how much money they made during an investment period. We focus again on the same investment period covering the Brexit-referendum as in the previous subsection.

\begin{figure}[t]
	\centering
	\includegraphics[width=17cm]{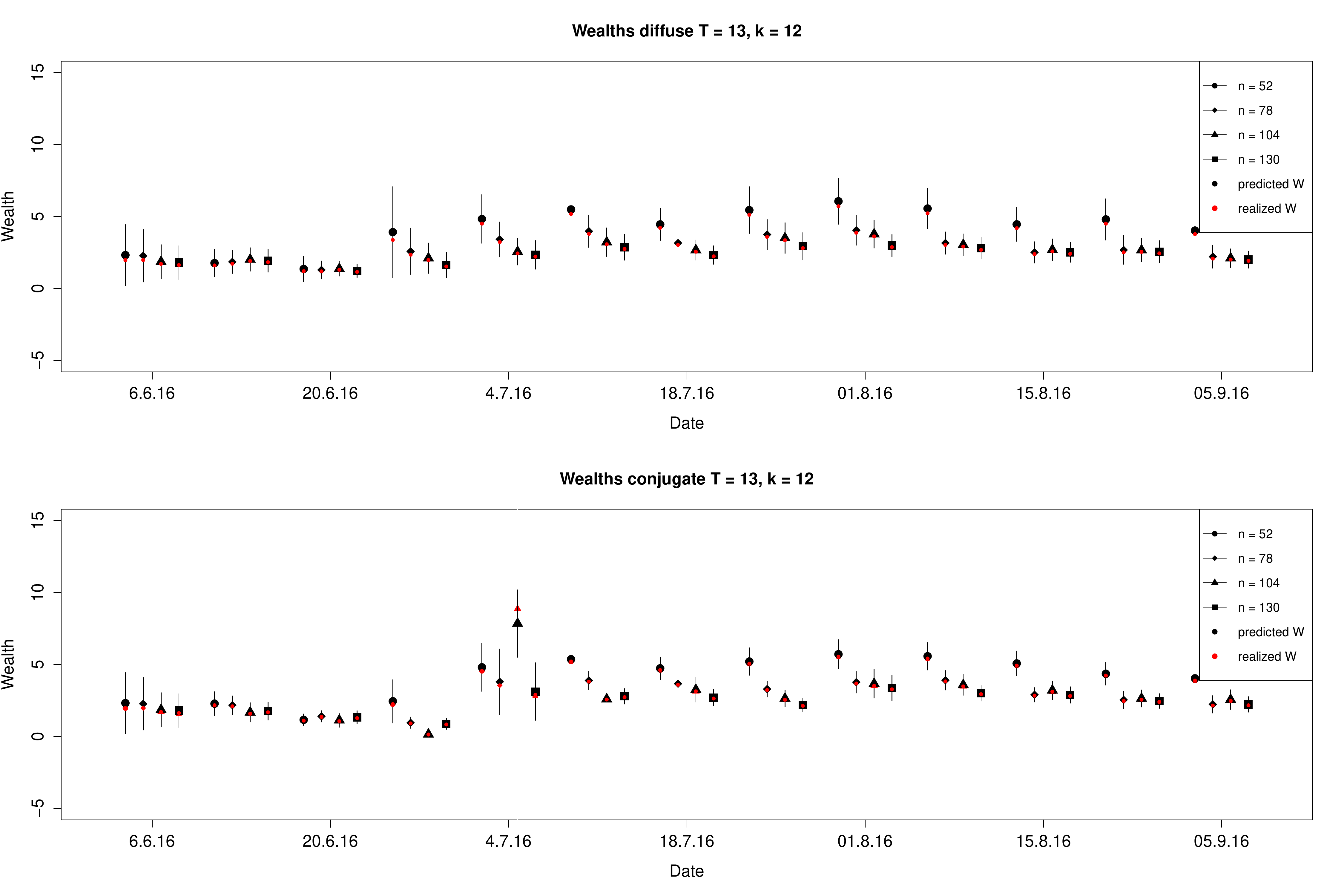}
	\caption{\footnotesize Wealth development and $95\%$ credible intervals for the diffuse prior (above) and for the conjugate prior (below). The wealth for smaller $n$ is almost always higher compared to a portfolio estimated with larger $n$, while the credible intervals are much narrower for larger $n$. \label{wealth_d}}
\end{figure}

During the lifetime of the portfolio, no bankruptcy occurred. But more importantly, the stochastic representation for the posterior predictive distribution given in Theorem 6 can be used to calculate credible intervals for the wealth. By generating $B=10^5$ draws from Theorem 6 and calculating the 95 $\%$ credible intervals, we generate upper and lower bounds for the wealth in the specific period. These intervals together with the predicted and realized wealths are shown in Figure \ref{wealth_d}. We observe a difference in the width of the intervals for lower and larger sample sizes which was expected. The credible intervals are considerably smaller for $n$ $\in$ $\{104, 130\}$ compared to smaller $n$. Note that the sample size has to be sufficiently large in relation to the number of assets. Otherwise, the credible intervals are inflated due to massive estimation uncertainty known as the curse of dimensionality.

It might happen that both the diffuse and the conjugate priors do not perform well when the sample size increases. The reason for the diffuse prior is that the empirical counterparts might not describe the portfolio running period well, indicating a trade-off between the actuality and stability of the parameters. This problem is amplified for the conjugate prior since the prior parameters are determined using even more distant data. While the data-driven approach to the conjugate prior is somewhat realistic, it is not completely in line with the Bayesian paradigm. When the expectations and therefore the choice of hyperparameters are closer to the return behaviour after the Brexit, the results could be improved. Although this is consistent with the Bayesian paradigm, such an approach is of course not entirely practical but not impractical: using appropriate forecasting methods, other data driven methods can be applicable as long as they yield a reliable point estimate. This subjective approach emphasizes the possibility as well as the necessity to resemble realistic future market behaviour in the prior parameterization and it is left for future research.

\subsection{Default probability}
Due to the accessability of the posterior predictive distribution, we can also calculate the default probability of our portfolio at each time point, defined as the event that our wealth becomes negative at this point in time. The predictive probability of default can easily be determined by calculating the amount of defaults in relation to all draws, in this case $B=10^5$. The development of the defaults is given in Figure \ref{PD}. Again, we find a pattern resembling the credible intervals of the posterior predictive distribution illustrated in the previous section with no surprises.

\begin{figure}[t]
	\centering
	\includegraphics[width=17cm]{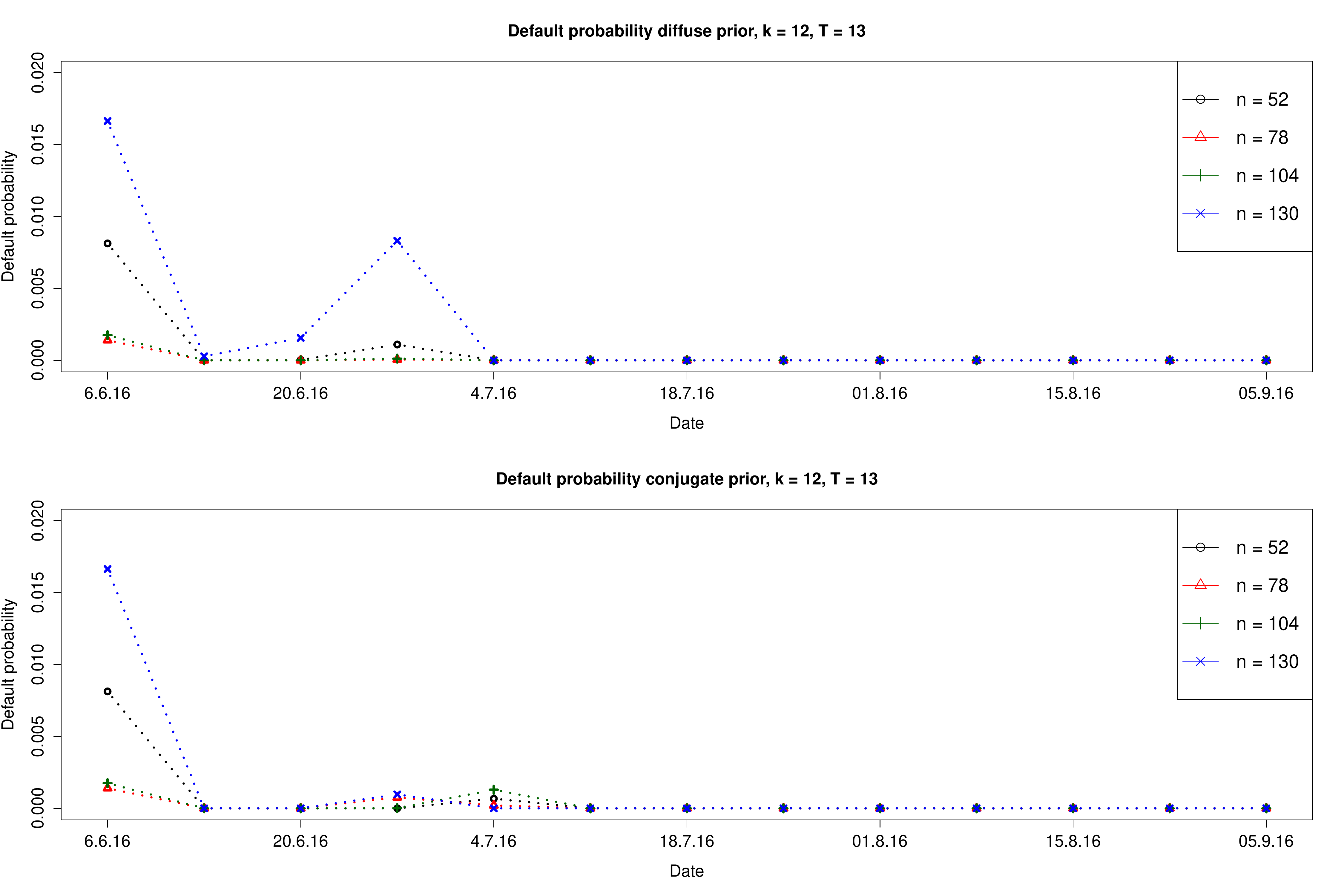}
	\caption{\footnotesize Default probabilities for the diffuse prior (above) and for the conjugate prior (below).\label{PD}}
\end{figure}

Starting with the diffuse prior, we observe a slightly increased default probability on 27.6.2016, the week after the Brexit referendum. With the conjugate prior, this default probability is lower in the same week. Again, the peak for $n=130$ of the diffuse prior again resembles the trade-off between parameter stability and actuality, resulting here in a slightly increased default probability. The default probability for the conjugate prior is slightly increased in the following week compared to the diffuse prior, presumably due to parameters relying on a wider estimation window.

\section{Summary}
In this paper we consider the estimation of the multi-period portfolio for an exponential utility function in a Bayesian setting. Since the portfolio weights are given as the product of two multivariate/matrix-variate random quantities, accessing the distribution of the weights is a challenging task. By choosing the non-informative and the conjugate prior, the posterior distributions of the weights have pleasing properties since the conditional distribution of the precision matrix for a given return vector is an inverted Wishart distribution. With this insight we could use this well understood distribution (c.f. \cite{Muirhead1982}) to derive stochastic representations for the weights which is a direct access to the posterior distribution. Furthermore, these representations also provide us with Bayesian estimates for the optimal portfolio weights together with their covariance matrix. In addition to this, we derive the posterior predictive distribution for the wealth which makes it possible to calculate the quantiles of the portfolio wealth at each time point of the investment period and it is therefore highly relevant for risk purposes. The method is then applied to real data from the FTSE 100 covering the period of the Brexit referendum. With these data we determine the posterior distribution of the weights, the predictive wealths in each period, the lower wealth quantiles as well as the default probability in every time period.

It turns out that the use of stochastic representations to generate the posterior distribution numerically is computationally highly efficient: the representations rely on samples from well known distributions and no MCMC methods are needed. In the empirical part of Section 3 it was demonstrated that these methods work well and are easy to implement. We have to emphasize several points: while the non-informative prior will yield results which coincide with the common frequentist case and is as easily to apply as the classical case, the conjugate or informative prior is said to involve a potentially large degree of subjectivity -- sometimes implying that the frequentist approach or the non-informative prior would be objective. But we have to choose the sample size in all of these cases which is naturally a subjective choice with a huge effect on the performance of the portfolio as we demonstrate in Section 3. This trade-off between parameter actuality and parameter stability has to be faced by the practitioner. One advantage of the conjugate prior is of course that we can incorporate our beliefs regarding the future behaviour of the asset returns in our model which is not possible neither in the frequentist nor in the non-informative case. This is clearly at the core of every investment decision and reflects natural decision making. Nevertheless, the hyperparameters have to be chosen carefully and a rigorous sensitivity analysis is left for future research.

There are still other open research questions regarding the multi-period portfolio choice with exponential utility function which are left for future research. The present approach can be extended to the case with predictable variables as discussed in \cite{bodnar2015exact} in the case of the known parameters of the asset return distribution. This, however, is much more difficult due to the more complicated structure of the optimal portfolio weights and the dependence structure of the asset returns. Furthermore, the multi-period optimal portfolios obtained by using other utility functions can be estimated following the approach suggested in the paper.

\section{Appendix}

\subsection{Proofs of the theorems}
In this part of the paper we present the proofs of the theoretical results. First, we note that the derived posterior distributions under the diffuse prior and under the conjugate prior in Proposition \ref{prop2} have a similar structure. For that reason, we formulate and prove some lemmas from which the results in both cases of the diffuse prior and the conjugate prior follow.

\begin{lemma}\label{lem1}
Let
\begin{eqnarray*}
\bOmega | \bnu, \by \sim \mathcal{IW}_k (k_y, \bS_y^*(\bnu)) ~~ \text{and} ~~ \bnu | \by \sim  t_k\left(d_y,\bm_y,\bS_y/d_y\right),
\end{eqnarray*}
where $\bS_y^*(\bnu)=v_y(\bS_y+(\bnu-\bm_y)(\bnu-\bm_y)^\top)$ and let $\bM$ be a $p\times k$-dimensional matrix of constants. Then the stochastic representation of
$\bM \bOmega^{-1} (\bnu-\mathbf{a})$ is given by
\begin{eqnarray*}
&&\bM \bOmega^{-1} (\bnu-\mathbf{a})\stackrel{d}{=}\eta \bM \bS_y^*(\bnu)^{-1} (\bnu-\mathbf{a} )\\
&+&\sqrt{\eta}\left((\bnu-\mathbf{a})^\top \bS_y^*(\bnu)^{-1} (\bnu-\mathbf{a} )\cdot\bM \bS_y^*(\bnu)^{-1} \bM^\top-\bM \bS_y^*(\bnu)^{-1} (\bnu-\mathbf{a} )(\bnu-\mathbf{a})^\top \bS_y^*(\bnu)^{-1} \bM^\top\right)^{1/2}\bz_0,
\end{eqnarray*}
where $\eta  \sim \chi^2_{k_y - k - 1}$, $\bz_0 \sim \mathcal N _p (\mathbf 0 , \mathbf I_p)$, and $\bnu | \by \sim t_k\left(d_y,\bm_y,\bS_y/d_y\right)$; moreover, $\eta, \mathbf z_0$ and $\bnu$ are mutually independent.
\end{lemma}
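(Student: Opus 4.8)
The plan is to condition throughout on $\bnu$ (and on the data $\by$), so that $\bS_y^*(\bnu)$ becomes a fixed positive-definite $k\times k$ matrix, and then to exploit the correspondence between the inverse-Wishart and the Wishart laws together with a Gaussian representation of the Wishart matrix. First I would note that, conditionally on $\bnu$, the assumption $\bOmega|\bnu,\by \sim \mathcal{IW}_k(k_y,\bS_y^*(\bnu))$ is equivalent (c.f.\ \cite{Muirhead1982}) to $\bOmega^{-1}|\bnu,\by \sim \mathcal{W}_k(k_y-k-1,\bS_y^*(\bnu)^{-1})$; this is the step that fixes the degrees of freedom $m:=k_y-k-1$ later appearing as the index of $\eta$, and it must be read off carefully under the paper's inverse-Wishart convention. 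I would then write this Wishart matrix in its canonical form $\bOmega^{-1}=\sum_{i=1}^{m}\bz_i\bz_i^\top$, where $\bz_1,\dots,\bz_m$ are i.i.d.\ $\mathcal{N}_k(\mathbf{0},\bS_y^*(\bnu)^{-1})$, conditionally on $\bnu$.

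With this representation the target linear form becomes a sum of products, $\bM\bOmega^{-1}(\bnu-\mathbf{a})=\sum_{i=1}^{m}u_i\,(\bM\bz_i)$ with the scalars $u_i:=\bz_i^\top(\bnu-\mathbf{a})$. For each $i$ the pair $(u_i,\bM\bz_i)$ is jointly Gaussian, so I would regress $\bM\bz_i$ on $u_i$, writing $\bM\bz_i=\bM\bS_y^*(\bnu)^{-1}(\bnu-\mathbf{a})\,u_i/[(\bnu-\mathbf{a})^\top\bS_y^*(\bnu)^{-1}(\bnu-\mathbf{a})]+\bepsilon_i$, where the residuals $\bepsilon_i$ are i.i.d.\ $\mathcal{N}_p(\mathbf{0},\bSigma_\epsilon)$, independent of $\{u_j\}$, with residual covariance $\bSigma_\epsilon=\bM\bS_y^*(\bnu)^{-1}\bM^\top-\bM\bS_y^*(\bnu)^{-1}(\bnu-\mathbf{a})(\bnu-\mathbf{a})^\top\bS_y^*(\bnu)^{-1}\bM^\top/[(\bnu-\mathbf{a})^\top\bS_y^*(\bnu)^{-1}(\bnu-\mathbf{a})]$. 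Substituting yields $\bM\bOmega^{-1}(\bnu-\mathbf{a})=\bM\bS_y^*(\bnu)^{-1}(\bnu-\mathbf{a})\cdot\sum_i u_i^2/[(\bnu-\mathbf{a})^\top\bS_y^*(\bnu)^{-1}(\bnu-\mathbf{a})]+\sum_i u_i\bepsilon_i$.

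To finish I would identify the two pieces. Since the $u_i\sim\mathcal{N}(0,(\bnu-\mathbf{a})^\top\bS_y^*(\bnu)^{-1}(\bnu-\mathbf{a}))$ are i.i.d., the quantity $\eta:=\sum_i u_i^2/[(\bnu-\mathbf{a})^\top\bS_y^*(\bnu)^{-1}(\bnu-\mathbf{a})]$ is $\chi^2_m$, and the first term becomes exactly $\eta\,\bM\bS_y^*(\bnu)^{-1}(\bnu-\mathbf{a})$. Conditionally on $\{u_j\}$ the second term is $\mathcal{N}_p(\mathbf{0},(\sum_i u_i^2)\bSigma_\epsilon)$, so it equals in distribution $\sqrt{\eta}\big[(\bnu-\mathbf{a})^\top\bS_y^*(\bnu)^{-1}(\bnu-\mathbf{a})\,\bSigma_\epsilon\big]^{1/2}\bz_0$ for a $\bz_0\sim\mathcal{N}_p(\mathbf{0},\bI_p)$ that, crucially, is independent of $\{u_j\}$ and hence of $\eta$, because the conditional law of $\sum_i u_i\bepsilon_i/\sqrt{\sum_i u_i^2}$ given $\{u_j\}$ is $\mathcal{N}_p(\mathbf{0},\bSigma_\epsilon)$, free of $\{u_j\}$. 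A direct expansion shows $(\bnu-\mathbf{a})^\top\bS_y^*(\bnu)^{-1}(\bnu-\mathbf{a})\,\bSigma_\epsilon$ equals the bracketed matrix in the statement, which gives the claimed representation conditionally on $\bnu$; unconditioning is then immediate since $\eta$ and $\bz_0$ are independent of $\bnu$ and the representation depends on $\bnu$ only through $\bS_y^*(\bnu)$ and $\bnu-\mathbf{a}$, leaving $\bnu\sim t_k(d_y,\bm_y,\bS_y/d_y)$ independent of $(\eta,\bz_0)$. The delicate points I would argue most carefully are the independence of $\eta$ and $\bz_0$ and the treatment of the symmetric matrix square root when $\bSigma_\epsilon$ is rank-deficient (which occurs as soon as $p\ge 2$); both are settled by observing that the standardised residual direction is distributionally free of $\{u_j\}$ and by working with the square root of the genuine $p\times p$ covariance rather than factoring $\bSigma_\epsilon$ itself.
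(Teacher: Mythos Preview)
Your argument is correct. The paper's proof rests on the same inverse-Wishart/Wishart correspondence but then takes a slightly more packaged route: it augments $\bM$ to $\widetilde{\bM}=(\bM^\top,\bnu-\mathbf{a})^\top$, observes that $\bXi=\widetilde{\bM}\,\bOmega^{-1}\widetilde{\bM}^\top$ is $(p{+}1)$-dimensional Wishart with $k_y-k-1$ degrees of freedom and scale $\bV=\widetilde{\bM}\,\bS_y^*(\bnu)^{-1}\widetilde{\bM}^\top$, and then reads off the result from two standard block-partition facts in \cite{Muirhead1982}: Theorem~3.2.8 gives $\Xi_{22}/V_{22}\sim\chi^2_{k_y-k-1}$ (your $\eta$), and Theorem~3.2.10 gives $\bXi_{12}\mid\Xi_{22}\sim\mathcal{N}_p(\bV_{12}V_{22}^{-1}\Xi_{22},\,\Xi_{22}\bV_{11\cdot 2})$, which is precisely the conditional Gaussian you obtain. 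Your Gaussian-sum representation $\bOmega^{-1}=\sum_i\bz_i\bz_i^\top$ together with the regression of $\bM\bz_i$ on $u_i=\bz_i^\top(\bnu-\mathbf{a})$ is, in effect, an elementary in-line proof of those two Wishart theorems in this particular $(p{+}1)$-block setting; the Schur complement $\bV_{11\cdot 2}$ is exactly your residual covariance $\bSigma_\epsilon$, and $(\bnu-\mathbf{a})^\top\bS_y^*(\bnu)^{-1}(\bnu-\mathbf{a})\cdot\bSigma_\epsilon$ is the bracketed matrix in the statement. What you gain is self-containment and an explicit, clean justification of the mutual independence of $\eta$, $\bz_0$, and $\bnu$ (and a remark on the rank-deficient square root), points the paper handles by appeal to the cited theorems; what the paper gains is brevity once those references are granted.
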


\begin{proof}[Proof of Lemma \ref{lem1}]
Since $\bOmega^*\stackrel{d}{=}\bOmega | \bnu=\bnu^*, \by \sim \mathcal{IW}_k (k_y, \bS_y^*(\bnu^*))$ and, consequently, $\bOmega^{*\,-1}\sim \mathcal{W}_k (k_y-k-1, \bS_y^*(\bnu^*)^{-1})$ (c.f., Theorem 3.4.1 in \cite{GuptaNagar2000}, it holds that (see, e.g., Theorem 3.2.5 in \cite{Muirhead1982})
\begin{eqnarray*}
\bXi^*=\widetilde {\bM} \bOmega^{*\,-1} \widetilde {\bM}^\top \sim \mathcal{W}_k (k_y-k-1, \bV^*),
\end{eqnarray*}
with $\widetilde {\bM} = (\bM^\top, \bnu^*-\mathbf{a})^\top$ and $\bV^*=\tilde {\bM} \bS_y^*(\bnu^*)^{-1} \tilde {\bM}^\top$. Next, we partition $\bXi^*$ and $\bV^*$ in the following way
\[\bXi^*=\left(
           \begin{array}{cc}
             \bXi^*_{11} & \bXi^*_{12} \\
             \bXi^*_{21} &  \Xi^*_{22} \\
           \end{array}
         \right)
=\left(
   \begin{array}{cc}
     \bM \bOmega^{*\,-1} \bM^\top & (\bnu^*-\mathbf{a})^{\top} \bOmega^{*\,-1} \bM^\top \\
     \bM \bOmega^{*\,-1} (\bnu^*-\mathbf{a}) & (\bnu^*-\mathbf{a})^{\top} \bOmega^{*\,-1} (\bnu^*-\mathbf{a}) \\
   \end{array}
 \right)\]
and
\[\bV^*=\left(
           \begin{array}{cc}
             \bV^*_{11} & \bV^*_{12} \\
             \bV^*_{21} & V^*_{22} \\
           \end{array}
         \right)
=\left(
   \begin{array}{cc}
     \bM \bS_y^*(\bnu^*)^{-1} \bM^\top & (\bnu^*-\mathbf{a})^{\top} \bS_y^*(\bnu^*)^{-1} \bM^\top \\
     \bM \bS_y^*(\bnu^*)^{-1} (\bnu^*-\mathbf{a})& (\bnu^*-\mathbf{a})^{\top} \bS_y^*(\bnu^*)^{-1} (\bnu^*-\mathbf{a}) \\
   \end{array}
 \right)\,.\]
The application of Theorem 3.2.10 in \cite{Muirhead1982} yields
\[\bXi_{12}^* | \Xi_{22}^* \sim \mathcal{N}_p (  \bV_{12}    V_{22}^{-1} \Xi_{22}^*,   \bV_{11 \cdot 2}\Xi_{22}^*) ~~ \text{with} ~~
\bV_{11 \cdot 2} = \bV_{11} - \frac{\bV_{12}\bV_{21}}{V_{22}}.\]

Defining $\eta = \Xi_{22}^* / V_{22}$ and using Theorem 3.2.8 of \cite{Muirhead1982} we get that $\eta \sim \chi^2_{k_y - k-1}$. Since the $\chi^2_{k_y - k-1}$-distribution is independent of $\bnu=\bnu^*$ and $\by$ (on which the distribution of $\Xi_{22}^*$ depends on by definition of $\bXi^*$), it is also the unconditional distribution of $\eta$ as well as $\eta$ is independent of both $\bnu$ and $\by$. Thus, the stochastic representation of  $\bM \bOmega^{-1} (\bnu-\mathbf{a})$ is given by
\begin{eqnarray*}
\bM \bOmega^{-1} (\bnu-\mathbf{a} )&\stackrel{d}{=}&\eta \bM \bS_y^*(\bnu)^{-1} (\bnu-\mathbf{a} )+\sqrt{\eta}\Bigg((\bnu-\mathbf{a} )^\top \bS_y^*(\bnu)^{-1} (\bnu-\mathbf{a} )\cdot\bM \bS_y^*(\bnu)^{-1} \bM^\top\\
&-&\bM \bS_y^*(\bnu)^{-1} (\bnu-\mathbf{a} )(\bnu-\mathbf{a})^\top \bS_y^*(\bnu)^{-1} \bM^\top\Bigg)^{1/2}\bz_0,
\end{eqnarray*}
where $\eta \sim \chi^2_{k_y- k - 1}$, $\bz_0 \sim \mathcal{N} _p (\mathbf{0}, \bI_p)$, and $\bnu | \by \sim t_k\left(d_y,\bm_y,\bS_y/d_y\right)$; moreover, $\eta$, $\bz_0$ and $\bnu$ are mutually independent. This completes the proof of the lemma.
\end{proof}

\begin{proof}[Proof of Theorem \ref{stochrep}]
The results of Theorem \ref{stochrep} follow from Lemma \ref{lem1} with $\bM=C_t\bL$, $\bSigma=\bOmega$, $\bnu=\bmu$, $\mathbf{a}=r_{f,t+1}\bOne$ and
\begin{enumerate}[(a)]
\item $k_y=n+k+1$, $d_y=n-k$, $v_y=n$, $\bm_y=\overline{\bx}_{t,d}$, $\bS_y=\bS_{t,d}/n$, and $\bS_y^*(\bnu)=\bS_{t,d}^*(\bmu)$ in the case of the diffuse prior;
\item $k_y=n+d_0+1$, $d_y=n+d_0-2k$, $v_y=n+r_0$, $\bm_y=\overline{\bx}_{t,c}$, $\bS_y=\bS_{t,c}/(n+r_0)$, and $\bS_y^*(\bnu)=\bS_{t,c}^*(\bmu)$  in the case of the conjugate prior.
\end{enumerate}
\end{proof}

\begin{lemma}\label{lem2}
Under the conditions of Lemma 1, we get the following stochastic representation of $\bM \bOmega^{-1} (\bnu-\mathbf{a})$ expressed as
\begin{eqnarray*}
\bM \bOmega^{-1} (\bnu-\mathbf{a}) \stackrel{d}{=} v_y^{-1}\eta \bM \bzeta +
v_y^{-1}\sqrt{\eta}\left(\epsilon \bM \bUpsilon \bM^\top -\bM \bzeta \bzeta^\top \bM^\top\right)^{1/2} \bz_0,
 \end{eqnarray*}
with
\begin{eqnarray*}
\epsilon &=& \epsilon (Q, \bU) = (\bm_y-\mathbf{a})^\top \bS_y ^{-1} (\bm_y-\mathbf{a})
+ 2\frac{ \sqrt{k Q/d_y}}{1+k Q/d_y}(\bm_y-\mathbf{a})^\top \bS_y^{-1/2} \bU \\
&+& \frac{k Q/d_y}{1+ k Q/d_y} -\frac{k Q/d_y}{1+k Q/d_y}\left( (\bm_y-\mathbf{a})^\top \bS_y^{-1/2} \bU\right)^2,
\\
\bzeta&=&\bzeta (Q, \bU)= \bS_y^{-1} (\bm_y-\mathbf{a})+\frac{ \sqrt{k  Q/d_y} }  {   1+ k Q/d_y}\bS_y^{-1/2} \bU
-\frac{k Q/d_y }  {   1+ k Q/d_y} \bS_y^{-1/2} \bU\bU^\top \bS_y^{-1/2} (\bm_y-\mathbf{a}),
\\
\bUpsilon &=&\bUpsilon (Q, \bU)=\bS_y^{-1}-\frac{k Q/d_y }  {   1+ k Q/d_y} \bS_y^{-1/2} \bU\bU^\top \bS_y^{-1/2},
\end{eqnarray*}
where $\eta  \sim \chi^2_{k_y-k-1}$, $\bz_0 \sim \mathcal{N}_p (\mathbf{0} , \bI_p)$, $Q \sim \mathcal{F}(k,d_y)$, and $\bU$ uniformly distributed on the unit sphere in $R^k$; moreover, $\eta$, $\bz_0$, $Q$, and $\bU$ are mutually independent.
\end{lemma}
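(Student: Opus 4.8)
The plan is to derive the representation in Lemma~\ref{lem2} directly from the representation in Lemma~\ref{lem1} by rewriting the random inverse $\bS_y^*(\bnu)^{-1}$ in terms of the fixed matrix $\bS_y^{-1}$. Since by hypothesis $\bS_y^*(\bnu)=v_y\left(\bS_y+(\bnu-\bm_y)(\bnu-\bm_y)^\top\right)$ is a rank-one update of $v_y\bS_y$, the first step is to apply the Sherman--Morrison formula,
\begin{equation*}
\bS_y^*(\bnu)^{-1}=v_y^{-1}\left(\bS_y^{-1}-\frac{\bS_y^{-1}(\bnu-\bm_y)(\bnu-\bm_y)^\top\bS_y^{-1}}{1+(\bnu-\bm_y)^\top\bS_y^{-1}(\bnu-\bm_y)}\right).
\end{equation*}

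The conceptual heart of the argument is the stochastic decomposition of the multivariate $t$ vector. Since $\bnu|\by\sim t_k(d_y,\bm_y,\bS_y/d_y)$, its representation is $\bnu-\bm_y\stackrel{d}{=}\bS_y^{1/2}\bz/\sqrt{s}$ with $\bz\sim\mathcal{N}_k(\mathbf{0},\bI_k)$ and $s\sim\chi^2_{d_y}$ independent. Writing $\bz=\sqrt{\bz^\top\bz}\,\bU$ with $\bU=\bz/\sqrt{\bz^\top\bz}$ uniform on the unit sphere in $\mathds{R}^k$ and independent of $\bz^\top\bz\sim\chi^2_k$, I would obtain
\begin{equation*}
\bnu-\bm_y\stackrel{d}{=}\sqrt{kQ/d_y}\;\bS_y^{1/2}\bU,\qquad Q=\frac{\bz^\top\bz/k}{s/d_y}\sim\mathcal{F}(k,d_y),
\end{equation*}
with $Q$ and $\bU$ independent. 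This is precisely the source of the factor $kQ/d_y$ and of the vector $\bU$ throughout the statement; in particular the Sherman--Morrison denominator collapses to $(\bnu-\bm_y)^\top\bS_y^{-1}(\bnu-\bm_y)=kQ/d_y$.

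Substituting this representation and using the unit-norm property $\bU^\top\bU=1$, which drives all the convenient cancellations (for instance $\bUpsilon(\bnu-\bm_y)=\frac{\sqrt{kQ/d_y}}{1+kQ/d_y}\bS_y^{-1/2}\bU$), one identifies $\bS_y^*(\bnu)^{-1}=v_y^{-1}\bUpsilon$ and $\bS_y^*(\bnu)^{-1}(\bnu-\mathbf{a})=v_y^{-1}\bzeta$ after splitting $\bnu-\mathbf{a}=(\bm_y-\mathbf{a})+(\bnu-\bm_y)$. Feeding these into the three matrix expressions of Lemma~\ref{lem1} gives $(\bnu-\mathbf{a})^\top\bS_y^*(\bnu)^{-1}(\bnu-\mathbf{a})=v_y^{-1}\epsilon$, $\bM\bS_y^*(\bnu)^{-1}\bM^\top=v_y^{-1}\bM\bUpsilon\bM^\top$, and $\bM\bS_y^*(\bnu)^{-1}(\bnu-\mathbf{a})=v_y^{-1}\bM\bzeta$, so that a common $v_y^{-1}$ factors out of the first term, while the bracket inside the square root carries $v_y^{-2}$ and hence contributes $v_y^{-1}$ after taking the root. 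Collecting terms reproduces the claimed representation.

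Finally I would record the independence structure: $\eta\sim\chi^2_{k_y-k-1}$ and $\bz_0$ carry over unchanged from Lemma~\ref{lem1} and are independent of $\bnu$, hence of $(Q,\bU)$, while $Q$ (a function of $\bz^\top\bz$ and $s$) is independent of the direction $\bU$, so $\eta,\bz_0,Q,\bU$ are mutually independent. I do not expect a single deep obstacle; the delicate part is purely bookkeeping, namely verifying the scalar identity $\epsilon=(\bnu-\mathbf{a})^\top\bUpsilon(\bnu-\mathbf{a})$ by expanding $\bnu-\mathbf{a}=(\bm_y-\mathbf{a})+\sqrt{kQ/d_y}\,\bS_y^{1/2}\bU$ and matching the constant, linear, and quadratic terms in $(\bm_y-\mathbf{a})^\top\bS_y^{-1/2}\bU$, together with correctly tracking the powers of $v_y$.
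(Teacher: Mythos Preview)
Your proposal is correct and follows essentially the same route as the paper: start from Lemma~\ref{lem1}, apply the Sherman--Morrison formula to $(\bS_y+(\bnu-\bm_y)(\bnu-\bm_y)^\top)^{-1}$, and replace the $t$-distributed deviation $\bnu-\bm_y$ by its polar decomposition to introduce the independent pair $(Q,\bU)$ with $Q\sim\mathcal{F}(k,d_y)$ and $\bU$ uniform on the unit sphere. The only cosmetic difference is that the paper defines $Q$ and $\bU$ directly as measurable functions of $\bnu-\bm_y$ and invokes general elliptical-distribution results (Theorem~2.15 of \cite{Gupta2013}) for their laws and independence, whereas you build them constructively from the normal--over--chi representation of the multivariate~$t$; the resulting computations and the verification of $\bUpsilon$, $\bzeta$, $\epsilon$ are identical.
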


\begin{proof}[Proof of Lemma \ref{lem2}]
The application of the Sherman-Morrison formula (see, e.g., p.125 in \cite{Meyer2000}) yields
\begin{equation}\label{lem2eq1}
(\bS_y+(\bnu-\bm_y)(\bnu-\bm_y)^\top)^{-1}
= \bS_y^{-1}-\frac{\bS_y^{-1} (\bnu - \bm_y) (\bnu - \bm_y)^\top \bS_y^{-1}} { 1+ (\bnu - \bm_y)^\top \bS_y^{-1}(\bnu - \bm_y)}
\end{equation}

Let
\begin{equation}\label{lem2eq2}
\bU = \frac{ \bS_y^{-1/2} (\bnu - \bm_y)}  {\sqrt{ (\bnu - \bm_y)^\top \bS_y^{-1} (\bnu - \bm_y)} }
~~ \text{and}~~Q=d_y (\bnu - \bm_y) ^\top \bS_y^{-1} (\bnu - \bm_y)/k.
\end{equation}

Since $\bnu | \by \sim t_k(d_y, \bm_y, \bS_y/d_y)$ and that the multivariate $t$-distribution belongs to the class of the elliptically contoured distributions, we obtain that $\bU $ and $Q $ are independent, and $\bU$ is uniformly distributed on the unit sphere in $R^k$ (see Theorem 2.15 of \cite{Gupta2013}).
Moreover, from the properties of the multivariate $t$-distribution (see p. 19 of \cite{Kotz2004}), we get that $Q \sim \mathcal F(k,d_y)$, i.e., $Q$ has an $\mathcal F$-distribution with $k$ and $d_y$ degrees of freedom.

Hence, the application of the \eqref{lem2eq1} and \eqref{lem2eq2} leads to
\begin{eqnarray*}
&&(\bS_y+(\bnu-\bm_y)(\bnu-\bm_y)^\top)^{-1}= \bS_y^{-1} - \frac{ k Q /d_y} {   1+ k Q/d_y} \bS_y^{-1/2} \bU\bU^\top \bS_y^{-1/2},\\
&&(\bS_y+(\bnu-\bm_y)(\bnu-\bm_y)^\top)^{-1} (\bnu-\mathbf{a}) \\
&=&  \bS_y^{-1} (\bnu-\mathbf{a})- \frac{\bS_y^{-1} (\bnu - \bm_y) (\bnu - \bm_y)^\top \bS_y^{-1} (\bnu - \bm_y+ \bm_y-\mathbf{a})}
 { 1+ (\bnu - \bm_y)^\top \bS_y^{-1}(\bnu - \bm_y)}\\
&=& \bS_y^{-1} (\bm_y-\mathbf{a}) +
\frac{\bS_y^{-1} (\bnu - \bm_y)}  { 1+ (\bnu - \bm_y)^\top \bS_y^{-1}(\bnu - \bm_y)}
-\frac{\bS_y^{-1} (\bnu - \bm_y) (\bnu - \bm_y)^\top \bS_y^{-1} (\bm_y-\mathbf{a})}  { 1+ (\bnu - \bm_y)^\top \bS_y^{-1}(\bnu - \bm_y)}\\
&=& \bS_y^{-1} (\bm_y-\mathbf{a})+\frac{ \sqrt{  k Q/d_y} }  {   1+ k Q/d_y}\bS_y^{-1/2} \bU
-\frac{  k Q/d_y }  {   1+   k Q/d_y} \bS_y^{-1/2} \bU\bU^\top \bS_y^{-1/2} (\bm_y-\mathbf{a}),
\end{eqnarray*}
and
\begin{eqnarray*}
&&(\bnu-\mathbf{a})^\top (\bS_y+(\bnu-\bm_y)(\bnu-\bm_y)^\top)^{-1} (\bnu-\mathbf{a})\\
&=& (\bm_y-\mathbf{a})^\top \bS_y ^{-1} (\bm_y-\mathbf{a})
+ 2\frac{(\bm_y-\mathbf{a}) ^\top \bS_y^{-1/2} \bU \sqrt{ k Q/d_y}}{1+ k Q/d_y} \\
&+& \frac{k Q/d_y}{1+ k Q/d_y}
-\frac{k Q/d_y}{1+  k Q/d_y}\left( (\bm_y-\mathbf{a})^\top \bS_y^{-1/2} \bU\right)^2.
\end{eqnarray*}
Putting the above results together we obtain the statement of the lemma.
\end{proof}

\begin{proof}[Proof of Theorem \ref{stochrep2}]
The results of Theorem \ref{stochrep2} are obtained by using Lemma \ref{lem2} with $\bM=C_t\bL$, $\bSigma=\bOmega$, $\bnu=\bmu$, $\mathbf{a}=r_{f,t+1}\bOne$ and
\begin{enumerate}[(a)]
\item $k_y=n+k+1$, $d_y=n-k$, $v_y=n$, $\bm_y-\mathbf{a}=\overline{\bx}_{t,d}-r_{f,t+1}\bOne$, $\bS_y=\bS_{t,d}/n$, and $\bS_y^*(\bnu)=\bS_{t,d}^*(\bmu)$ in the case of the diffuse prior;
\item $k_y=n+d_0+1$, $d_y=n+d_0-2k$, $v_y=n+r_0$, $\bm_y-\mathbf{a}=\overline{\bx}_{t,c}-r_{f,t+1}\bOne$, $\bS_y=\bS_{t,c}/(n+r_0)$, and $\bS_y^*(\bnu)=\bS_{t,c}^*(\bmu)$  in the case of the conjugate prior.
\end{enumerate}
\end{proof}

\begin{proof}[Proof of Theorem \ref{th3}]
The proof of the theorem is based on the stochastic representations obtained in Theorem \ref{stochrep2}. Let $\bl$ be an arbitrary $k$-dimensional vector of constants.
\begin{enumerate}[(a)]
  \item Using that $\eta$, $\bz_0$ $Q$, and $\bU$ are independent and that $\mathbb{E}(\bz_0)=\mathbf{0}$, in the case of the diffuse prior we get
\begin{eqnarray*}
\mathbb{E}(\bl^\top \bw_t|\bx_{t,n}) &=& C_t \mathbb{E}(\eta) \bl^\top \mathbb{E}(\bzeta_d)
 \end{eqnarray*}
with $\mathbb{E}(\eta)=n$ and
\begin{eqnarray*}
\mathbb{E}(\bzeta_d|\bx_{t,n}) &=& \bS_{t,d}^{-1} (\overline{\bx}_{t,d}-r_{f,t+1}\bOne)+ \frac{1}{\sqrt{n}} \mathbb{E}\left(\frac{\sqrt{k  Q/(n-k)} }  { 1+ k Q/(n-k)}\bS_{t,d}^{-1/2}\right) \mathbb{E}(\bU)\\
&-&\mathbb{E}\left(\frac{k Q/(n-k) }  {   1+ k Q/(n-k)} \bS_{t,d}^{-1/2}\right) \mathbb{E}(\bU\bU^\top) \bS_{t,d}^{-1/2} (\overline{\bx}_{t,d}-r_{f,t+1}\bOne)\\
&=&\bS_{t,d}^{-1} (\overline{\bx}_{t,d}-r_{f,t+1}\bOne)-\frac{k}{n}\frac{1}{k}\bS_{t,d}^{-1}(\overline{\bx}_{t,d}-r_{f,t+1}\bOne),
 \end{eqnarray*}
where we use that $E (\bU) = \mathbf{0}$ and $E(\bU\bU^T) = \frac{1}{k} \mathbf{I_k}$ (see, e.g. Gupta et al. (2013)) as well as the fact that if $Q \sim \mathcal F (k, n-k)$, then ${\frac{k}{n-k} Q}/\left( 1 + \frac{k}{n-k} Q\right) \sim Beta \left( \frac{k}{2}, \frac{n-k}{2}\right)$. Hence,
$$E \left( \frac{\frac{k}{(n-k)} Q}{ 1 + \frac{k}{(n-k)} Q} \right) = \frac{k}{n}$$
and, consequently, since $\bl$ was an arbitrary vector, we get
\begin{eqnarray*}
\mathbb{E}(\bw_t|\bx_{t,n}) &=& C_t (n-1)\bS_{t,d}^{-1}(\overline{\bx}_{t,d}-r_{f,t+1}\bOne)\,.
\end{eqnarray*}

\item Similar computations as in part (a) leads to
\[
\mathbb{E}(\bw_t|\bx_{t,n}) = C_t (n+d_0-k-1)\bS_{t,c}^{-1}(\overline{\bx}_{t,c}-r_{f,t+1}\bOne)
\]
under the conjugate prior.
\end{enumerate}
\end{proof}

\begin{lemma}\label{lem3}
Under the assumption of Lemma \ref{lem2} with $\bM=\bb^\top: 1\times k$, we get that
\begin{eqnarray*}
&&v_y^2\mathbb{E} ((\bb^\top \bOmega^{-1} (\bnu-\mathbf{a}))^2 | \by)
=(k_y-k-1)(k_y-k)\Bigg[\left(1-\frac{2}{k+d_y}+\frac{2} {(k+d_y) (k+d_y+2)}\right)c_{12}^2\\
&+&\left(\frac{ d_y}{(k+d_y)(k+d_y+2)}+ \frac{1} {(k+d_y) (k+d_y+2)} c_2\right) c_1\Bigg]\\
&+& (k_y-k-1)\Bigg[\left(\frac{k-1}{k+d_y} +\left(1-\frac{1}{k}-\frac{1}{k+d_y}+\frac{1} {(k+d_y) (k+d_y+2)}\right)c_2\right)c_1\\
&+&\frac{2} {(k+d_y) (k+d_y+2)}c_{12}^2\Bigg],
\end{eqnarray*}
where $c_1  = \bb^\top \bS_y^{-1} \bb$, $c_2  = (\bm_y-\mathbf{a})^\top \bS_y^{-1} (\bm_y-\mathbf{a})$,
and $c_{12} = \bb^\top \bS_y^{-1} (\bm_y-\mathbf{a})$.
\end{lemma}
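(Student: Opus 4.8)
The plan is to specialize the stochastic representation of $\bM\bOmega^{-1}(\bnu-\mathbf{a})$ obtained in Lemma \ref{lem2} to the row vector $\bM=\bb^\top$, so that $p=1$ and $\bz_0$ collapses to a scalar standard normal $z_0$. Lemma \ref{lem2} then reads
\begin{equation*}
v_y\,\bb^\top\bOmega^{-1}(\bnu-\mathbf{a}) \stackrel{d}{=} \eta\,\bb^\top\bzeta + \sqrt{\eta}\,\bigl(\epsilon\,\bb^\top\bUpsilon\bb-(\bb^\top\bzeta)^2\bigr)^{1/2} z_0,
\end{equation*}
with $\eta\sim\chi^2_{k_y-k-1}$, $Q\sim\mathcal{F}(k,d_y)$ and $\bU$ uniform on the sphere, all mutually independent of $z_0$. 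Squaring and taking the conditional expectation given $\by$, the cross term drops out because $z_0$ is independent of $(\eta,Q,\bU)$ and $\mathbb{E}(z_0)=0$, while $\mathbb{E}(z_0^2)=1$; hence
\begin{equation*}
v_y^2\,\mathbb{E}\bigl((\bb^\top\bOmega^{-1}(\bnu-\mathbf{a}))^2\mid\by\bigr) = \mathbb{E}(\eta^2)\,\mathbb{E}\bigl((\bb^\top\bzeta)^2\bigr) + \mathbb{E}(\eta)\,\mathbb{E}\bigl(\epsilon\,\bb^\top\bUpsilon\bb-(\bb^\top\bzeta)^2\bigr).
\end{equation*}
This reduces the whole problem to the two scalar expectations $\mathbb{E}((\bb^\top\bzeta)^2)$ and $\mathbb{E}(\epsilon\,\bb^\top\bUpsilon\bb)$ over $(Q,\bU)$.

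Next I would insert the $\chi^2_{k_y-k-1}$ moments $\mathbb{E}(\eta)=k_y-k-1$ and $\mathbb{E}(\eta^2)=(k_y-k-1)(k_y-k+1)$. Regrouping the two contributions, the coefficient multiplying $\mathbb{E}((\bb^\top\bzeta)^2)$ becomes $\mathbb{E}(\eta^2)-\mathbb{E}(\eta)=(k_y-k-1)(k_y-k)$ and the coefficient multiplying $\mathbb{E}(\epsilon\,\bb^\top\bUpsilon\bb)$ becomes $\mathbb{E}(\eta)=k_y-k-1$, which are exactly the two prefactors appearing in the statement. Thus the first bracket must equal $\mathbb{E}((\bb^\top\bzeta)^2)$ and the second $\mathbb{E}(\epsilon\,\bb^\top\bUpsilon\bb)$.

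It then remains to evaluate these two $(Q,\bU)$-expectations. Writing $\beta=\tfrac{kQ/d_y}{1+kQ/d_y}$ and $\alpha=\tfrac{\sqrt{kQ/d_y}}{1+kQ/d_y}$, the facts I would use are that $\beta\sim\mathrm{Beta}(k/2,d_y/2)$ (exactly as in the proof of Theorem \ref{th3}) and that $\alpha^2=\beta(1-\beta)$, which give $\mathbb{E}(\beta)=\tfrac{k}{k+d_y}$, $\mathbb{E}(\beta^2)=\tfrac{k(k+2)}{(k+d_y)(k+d_y+2)}$ and $\mathbb{E}(\alpha^2)=\tfrac{k d_y}{(k+d_y)(k+d_y+2)}$. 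For the direction I would use $\mathbb{E}(\bU)=\mathbf{0}$, $\mathbb{E}(\bU\bU^\top)=\tfrac1k\bI_k$, and the fourth-moment identity $\mathbb{E}(u_iu_ju_lu_m)=\tfrac{\delta_{ij}\delta_{lm}+\delta_{il}\delta_{jm}+\delta_{im}\delta_{jl}}{k(k+2)}$ for the components $u_i$ of $\bU$. Introducing $\mathbf{h}=\bS_y^{-1/2}\bb$ and $\mathbf{g}=\bS_y^{-1/2}(\bm_y-\mathbf{a})$, so that $c_1=\mathbf{h}^\top\mathbf{h}$, $c_2=\mathbf{g}^\top\mathbf{g}$ and $c_{12}=\mathbf{h}^\top\mathbf{g}$, every term of $(\bb^\top\bzeta)^2$ and of $\epsilon\,\bb^\top\bUpsilon\bb$ becomes a quadratic or quartic form in $\bU$ weighted by a power of $\alpha,\beta$: the odd-order $\bU$-terms vanish by symmetry, the quadratic forms contribute $\tfrac{c_1}{k},\tfrac{c_2}{k},\tfrac{c_{12}}{k}$, and the quartic form evaluates to $\mathbb{E}\bigl((\mathbf{h}^\top\bU)^2(\mathbf{g}^\top\bU)^2\bigr)=\tfrac{c_1c_2+2c_{12}^2}{k(k+2)}$.

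The main obstacle is this last, bookkeeping-heavy collection step: after substituting the Beta moments and the sphere moments one must pair $\mathbb{E}(\alpha^2)$ and $\mathbb{E}(\beta)$ against the quadratic $\bU$-moments and $\mathbb{E}(\beta^2)$ against the quartic $\bU$-moment, then sort everything into the three monomials $c_1$, $c_1c_2$ and $c_{12}^2$. The genuinely delicate points are the cancellation of the odd-power cross terms (such as the $2c_{12}\alpha(\mathbf{h}^\top\bU)$ and $2\alpha\beta(\mathbf{h}^\top\bU)^2(\mathbf{g}^\top\bU)$ contributions) and the correct matching of $\mathbb{E}(\beta^2)$ with the factor $\tfrac{c_1c_2+2c_{12}^2}{k(k+2)}$; once these are handled the remaining simplification into the claimed coefficients is routine algebra.
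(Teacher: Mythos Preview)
Your proposal is correct and follows essentially the same route as the paper: specialize Lemma \ref{lem2} to $p=1$, square the stochastic representation, use independence and $\mathbb{E}(z_0)=0$, $\mathbb{E}(z_0^2)=1$ to split off the $\eta$-moments, then evaluate $\mathbb{E}((\bb^\top\bzeta)^2)$ and $\mathbb{E}(\epsilon\,\bb^\top\bUpsilon\bb)$ over $(Q,\bU)$ via the Beta moments of $\beta$ and the sphere moments of $\bU$. The only cosmetic differences are that the paper obtains $\mathbb{E}(\alpha^2)=\tfrac{kd_y}{(k+d_y)(k+d_y+2)}$ by direct integration of the $\mathcal{F}$-density rather than via your slicker identity $\alpha^2=\beta(1-\beta)$, and it computes the quartic sphere moment $\mathbb{E}\bigl((\mathbf{h}^\top\bU)^2(\mathbf{g}^\top\bU)^2\bigr)=\tfrac{c_1c_2+2c_{12}^2}{k(k+2)}$ by the $\sqrt{Q_N}\bU\sim\mathcal{N}_k(\mathbf{0},\bI_k)$ trick together with Isserlis' theorem, rather than your direct fourth-moment formula for the uniform on the sphere.
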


\begin{proof}
The proof of the lemma is based on the stochastic representations from Lemma \ref{lem2}. Since $\eta$, $\bz_0$, $Q$, and $\bU$ are independent as well as $\mathbb{E}(\bz_0)=\mathbf{0}$ and $\mathbb{E}(\bz_0 \bz_0^\top)=\bI_p$, we obtain
\begin{eqnarray*}
&&v_y^2\mathbb{E}((\bb^\top \bOmega^{-1} (\bnu-\mathbf{a}))^2|\by) = \mathbb{E}(\eta^2) \mathbb{E}((\bb^\top \bzeta)^2|\by) +
\mathbb{E}(\eta)\left(\mathbb{E}(\epsilon \bb^\top \bUpsilon \bb|\by) -\mathbb{E}((\bb^\top \bzeta)^2|\by)\right)\\
&=&(k_y-k-1)(k_y-k)\mathbb{E}((\bb^\top \bzeta)^2|\by)+(k_y-k-1)\mathbb{E}(\epsilon \bb^\top \bUpsilon \bb|\by)
 \end{eqnarray*}
with $\mathbb{E}(\eta)=k_y-k-1$ and $\mathbb{E}(\eta^2)=(k_y-k-1)(k_y-k+1)$.

The application of $E(\bU\bU^T)=\frac{1}{k} \mathbf{I}_k$ and the fact that all odd mixed moments of $\bU$ are zero yield
\begin{eqnarray*}
\mathbb{E}((\bb^\top \bzeta)^2|\by) &=&(\bb^\top\bS_y^{-1} (\bm_y-\mathbf{a}))^2
+ \frac{1}{k} \mathbb{E}\left(\frac{k  Q /d_y}{(1+ k Q/d_y)^2}\right) \bb^\top \bS_y^{-1} \bb\\
&-&\frac{2}{k}\mathbb{E}\left(\frac{ k Q/d_y }{1+ k Q/d_y}\right) (\bb^\top \bS_y^{-1} (\bm_y-\mathbf{a}))^2 \\
&+&E\left(\left( \frac{ k Q/d_y }  {   1+k Q/d_y}\right)^2\right)
E\left((\bb^\top \mathbf S_y^{-1/2} \mathbf {U})^2 ((\bm_y-\mathbf{a})^\top \bS_y^{-1/2} \mathbf {U})^2|\by\right)
\end{eqnarray*}
and
\begin{eqnarray*}
\mathbb{E}\left(\epsilon \bb^\top \bUpsilon \bb|\by\right) &=&
(\bm_y-\mathbf{a})^\top \bS_y ^{-1} (\bm_y-\mathbf{a}) \bb^\top \bS_y^{-1}\bb
 + \mathbb{E}\left(\frac{k Q/d_y}{1+ k Q/d_y} \right) \bb^\top \bS_y^{-1}\bb\\
&-&\frac{1}{k} \mathbb{E}\left(\frac{k Q/d_y}{1+ k Q/d_y}\right) (\bm_y-\mathbf{a})^\top \bS_y^{-1}(\bm_y-\mathbf{a}) \bb^\top \bS_y^{-1}\bb\\
&-&\frac{1}{k}(\bm_y-\mathbf{a})^\top \bS_y^{-1}(\bm_y-\mathbf{a}) \bb^\top \bS_y^{-1}\bb
 - \frac{1}{k}  \mathbb{E}\left(\frac{k Q/d_y}{1+ k Q/d_y}\right) \bb^T \bS_y^{-1}\bb\\
&+&E\left(\left( \frac{  k Q/d_y }  {   1+  k Q/d_y}\right)^2\right) \mathbb{E}\left((\bb^\top\bS_y^{-1/2} \bU)^2 ((\bm_y-\mathbf{a})^\top \bS_y^{-1/2} \bU)^2|\by\right)\,.
\end{eqnarray*}

Since $\frac{k Q/d_y}{ 1+ k Q/d_y}$ has a beta distribution with $k/2$ and $d_y/2$ degrees of freedom, we obtain
\begin{eqnarray*}
E\left( \frac{k Q/d_y}{ 1+ k Q/d_y} \right) &=& \frac{k}{k+d_y},\\
E \left( \frac{k Q/d_y}{ 1+ k Q/d_y} \right)^2 &=& \frac{2k d_y + k^2 (k+d_y +2)} {(k+d_y)^2 (k+d_y+2)}=\frac{k(k+2)} {(k+d_y)(k+d_y+2)}.
\end{eqnarray*}
Furthermore, using $ Q  \sim \mathcal F (k, d_y)$, we get
\begin{eqnarray*}
E \left[ \frac{k Q/d_y}{ (1+ k Q/d_y)^2}\right]
&=&\frac{1}{n_0} \int_0^{\infty}\frac{ k t/d_y}{ (1+ k t/d_y)^2}\frac{1}{ B \left( \frac{k}{2}, \frac{d_y}{2}\right)}\left(\frac{k}{d_y}\right)^{k/2}
t^{k/2-1}\left(1+ \frac{k}{d_y} t\right)^{-(k+d_y)/2} dt\\
&=& \frac{1}{B \left( \frac{k}{2}, \frac{d_y}{2}\right)}\int_0^{\infty}\left(\frac{k}{d_y}\right)^{(k+2)/2}t^{(k+2)/2 -1}
\left(1+ \frac{k}{d_y} t\right)^{-(k+d_y+4)/2}dt\\
&=&\frac{B \left( \frac{k+2}{2}, \frac{d_y +2}{2}\right)}{B \left( \frac{k}{2}, \frac{d_y}{2}\right)}
=\frac{k d_y}{(k+d_y)(k+d_y+2)},
\end{eqnarray*}
where $B (\cdot, \cdot)$ stands for the beta function (see, \citet[p. 256]{Mathai1992}).

Next, we compute $\mathbb{E}\left((\bb^\top\bS_y^{-1/2} \bU)^2 ((\bm_y-\mathbf{a})^\top \bS_y^{-1/2} \bU)^2|\by\right)$. Let $Q_N \sim \chi^2_k$ be independent of $\bU$. Then $\sqrt{Q_N} \bU$ has a multivariate standard normal distribution, i.e.
\begin{eqnarray*}
\begin{pmatrix}
\bb^\top \bS_y^{-1/2}\\
(\bm_y-\mathbf{a})^\top \bS_y^{-1/2}
\end{pmatrix}
\sqrt{Q_N} \bU
&\sim&
\mathcal{N}_{2}
\left(\mathbf 0,\begin{pmatrix}
\bb^\top\bS_y^{-1}\bb&\bb^\top \bS_y^{-1}(\bm_y-\mathbf{a})\\
(\bm_y-\mathbf{a})^\top \bS_y^{-1}\bb&(\bm_y-\mathbf{a})^\top \bS_y^{-1}(\bm_y-\mathbf{a})\\
\end{pmatrix}\right)\\
&=&\mathcal{N}_{2}\left(\mathbf 0,\begin{pmatrix}
c_1&c_{12}\\
c_{12}&c_{2}\\
\end{pmatrix}\right),
\end{eqnarray*}
where $c_1$, $c_2$, and $c_{12}$ are defined in the statement of Lemma \ref{lem3}. Hence,
\begin{eqnarray*}
&&\mathbb{E}\left((\bb^\top \bS_y^{-1/2} \bU)^2 ((\bm_y-\mathbf{a})^\top \bS_y^{-1/2} \bU)^2|\by\right)\\
&=&\mathbb{E}\left[\left(\bb^\top \bS_y^{-1/2} \bU\right)^2  \left(  (\bm_y-\mathbf{a})^\top \bS_y^{-1/2} \bU  \right)^2 | \mathbf y\right]\frac{\mathbb{E} (Q_N^2)}{\mathbb{E} (Q_N^2)}\\
&=&\frac{\mathbb{E}\left[\left(\bb^\top \bS_y^{-1/2} \sqrt{Q_N}\bU\right)^2  \left( (\bm_y-\mathbf{a})^\top \sqrt{Q_N}\bS_y^{-1/2} \bU  \right)^2 | \by\right]
}{\mathbb{E} (Q_N^2)}=\frac{c_1 c_2 + 2 c_{12}^2}{k (k+2)},
\end{eqnarray*}
where the last equality follows from the Isserlis' theorem (c.f., \cite{Isserlis1918}).

Hence,
\begin{eqnarray*}
E (\bb^\top \bzeta \bzeta^\top \bb) &=& c_{12}^2+\frac{1}{k}\frac{k d_y}{(k+d_y)(k+d_y+2)} c_1\\
&-&\frac{2}{k}\frac{k}{k+d_y} c_{12}^2 +\frac{k (k+2)} {(k+d_y) (k+d_y+2)} \frac{c_1 c_2 + 2 c_{12}^2}{k (k+2)}\\
&=&\left(1-\frac{2}{k+d_y}+\frac{2} {(k+d_y) (k+d_y+2)}\right)c_{12}^2\\
&+&\left(\frac{ d_y}{(k+d_y)(k+d_y+2)}+ \frac{1} {(k+d_y) (k+d_y+2)} c_2\right) c_1
\end{eqnarray*}
and
\begin{eqnarray*}
&&E\left(\epsilon \bb^\top \bUpsilon \bb\right) =
c_1c_2 +  \frac{k}{k+d_y} c_1-\frac{1}{k} \frac{k}{k+d_y} c_1c_2\\
&-&\frac{1}{k}c_1c_2- \frac{1}{k}  \frac{k}{k+d_y}c_1+\frac{k (k+2)} {(k+d_y) (k+d_y+2)} \frac{c_1 c_2 + 2 c_{12}^2}{k (k+2)}\\
&=&\frac{2} {(k+d_y) (k+d_y+2)}c_{12}^2+\left(\frac{k-1}{(k+d_y)} +\left(1-\frac{1}{k}-\frac{1}{k+d_y}+\frac{1} {(k+d_y) (k+d_y+2)}\right)c_2\right)c_1\,.
\end{eqnarray*}
\end{proof}

\begin{proof}[Proof of Theorem \ref{th4}]
The results of Theorem \ref{th4} are obtained by using Lemma \ref{lem3} with $\bb=C_t\bl$, $\bSigma=\bOmega$, $\bnu=\bmu$, $\mathbf{a}=r_{f,t+1}\bOne$ and Theorem \ref{th3}.
\begin{enumerate}[(a)]
\item In the case of the diffuse prior, using $k_y=n+k+1$, $d_y=n-k$, $v_y=n$, $\bm_y-\mathbf{a}=\overline{\bx}_{t,d}-r_{f,t+1}\bOne$, $\bS_y=\bS_{t,d}/n$, $c_1  = n C_t^2\bl^\top \bS_{t,d}^{-1} \bl$, $c_2  = n (\overline{\bx}_{t,d}-r_{f,t+1}\bOne)^\top \bS_{t,d}^{-1} (\overline{\bx}_{t,d}-r_{f,t+1}\bOne)$, and $c_{12} = n C_t\bl^\top \bS_{t,d}^{-1} (\overline{\bx}_{t,d}-r_{f,t+1}\bOne)$ we get
\begin{eqnarray*}
&&\mathbb{V}ar (\bl^\top \bw_t | \by)\\
&=&\frac{1}{n^2}\Bigg\{n(n+1)\Bigg[\left(1-\frac{2}{n}+\frac{2} {n (n+2)}\right)c_{12}^2
+\left(\frac{ n-k}{n(n+2)}+ \frac{1} {n (n+2)} c_2\right) c_1\Bigg]\\
&+& n\Bigg[\left(\frac{k-1}{n} +\left(1-\frac{1}{k}-\frac{1}{n}+\frac{1} {n (n+2)}\right)c_2\right)c_1
+\frac{2} {n (n+2)}c_{12}^2\Bigg]-(n-1)^2 c_{12}^2\Bigg\}\\
&=&\frac{n-1}{n^2}c_{12}^2+c_1\frac{1}{n^2} \left(\frac{n^2+k-2}{n+2}+\frac{n(k-1)}{k}c_2\right)\\
&=&\bl^\top\left(C_t^2\left( (n-1)\bS_{t,d}^{-1} (\overline{\bx}_{t,d}-r_{f,t+1}\bOne)(\overline{\bx}_{t,d}-r_{f,t+1}\bOne)^\top\bS_{t,d}^{-1}
+ \left(\frac{n^2+k-2}{n(n+2)}+\frac{k-1}{k}b_d\right)\bS_{t,d}^{-1} \right)\right)\bl
\end{eqnarray*}
where $b_d=n (\overline{\bx}_{t,d}-r_{f,t+1}\bOne)^\top \bS_{t,d}^{-1} (\overline{\bx}_{t,d}-r_{f,t+1}\bOne)$. Since $\bl$ is an arbitrary vector, the results in part (a) follow.

\item In the case of the conjugate prior, the application of $k_y=n+d_0+1$, $d_y=n+d_0-2k$, $v_y=n+r_0$, $\bm_y-\mathbf{a}=\overline{\bx}_{t,c}-r_{f,t+1}\bOne$, and $\bS_y=\bS_{t,c}/(n+r_0)$, $c_1  = (n+r_0)C_t^2\bl^\top \bS_{t,c}^{-1} \bl$, $c_2  = (n+r_0)(\overline{\bx}_{t,d}-r_{f,t+1}\bOne)^\top \bS_{t,c}^{-1} (\overline{\bx}_{t,d}-r_{f,t+1}\bOne)$, and $c_{12} = (n+r_0)C_t\bl^\top \bS_{t,c}^{-1} (\overline{\bx}_{t,c}-r_{f,t+1}\bOne)$.  leads to
\begin{eqnarray*}
&&\mathbb{V}ar (\bl^\top \bw_t | \by)\\
&=&\frac{1}{(n+r_0)^2}\Bigg\{(n+d_0-k)(n+d_0-k+1)\\
&\times&\Bigg[\left(1-\frac{2}{n+d_0-k}+\frac{2} {(n+d_0-k) (n+d_0-k+2)}\right)c_{12}^2\\
&+&\left(\frac{ n+d_0-2k}{(n+d_0-k)(n+d_0-k+2)}+ \frac{1} {(n+d_0-k) (n+d_0-k+2)} c_2\right) c_1\Bigg]\\
&+& (n+d_0-k)\Bigg[\left(\frac{k-1}{n+d_0-k} +\left(1-\frac{1}{k}-\frac{1}{n+d_0-k}+\frac{1} {(n+d_0-k) (n+d_0-k+2)}\right)c_2\right)c_1\\
&+&\frac{2} {(n+d_0-k) (n+d_0-k+2)}c_{12}^2\Bigg]-(n+d_0-k-1)^2 c_{12}^2\Bigg\}\\
&=&\frac{1}{(n+r_0)^2}\Bigg[\frac{n+d_0-k-1}{(n+d_0-k)^2}c_{12}^2+c_1\left(\frac{(n+d_0-k)^2+k-2}{n+d_0-k+2}+\frac{(n+d_0-k)(k-1)}{k}c_2\right)\Bigg]\\
&=&\bl^\top\Bigg\{C_t^2\Bigg[ (n+d_0-k-1)\bS_{t,c}^{-1} (\overline{\bx}_{t,c}-r_{f,t+1}\bOne)(\overline{\bx}_{t,c}-r_{f,t+1}\bOne)^\top\bS_{t,c}^{-1}\\
&+& \left(\frac{(n+d_0-k)^2+k-2}{(n+r_0)(n+d_0-k+2)}+\frac{(n+d_0-k)(k-1)}{(n+r_0)k}b_c\right)\bS_{t,c}^{-1} \Bigg]\Bigg\}\bl
\end{eqnarray*}
where $b_c=(n+r_0) (\overline{\bx}_{t,c}-r_{f,t+1}\bOne)^\top \bS_{t,c}^{-1} (\overline{\bx}_{t,c}-r_{f,t+1}\bOne)$. Since $\bl$ is an arbitrary vector, we get the statement of Theorem \ref{th4}.(b).
\end{enumerate}
\end{proof}

\begin{proof}[Proof of Theorem \ref{th5}]
Let $\bl$ be an arbitrary $k$-dimensional vector. From Theorem \ref{stochrep} with $\bL=\bl^\top$, we get the following stochastic representations of $\bL \bw_t$ under the diffuse prior and the conjugate prior expressed as
\begin{eqnarray*}
\bl^\top \bw_t &\stackrel{d}{=}& C_t \eta \bl^\top \bS_{t,d}^*(\bmu)^{-1} (\bmu-r_{f,t+1})
+ C_t \sqrt{ \eta } \Big((\bmu-r_{f,t+1})^\top \bS_{t,d}^*(\bmu)^{-1} (\bmu-r_{f,t+1}) \cdot \bl^\top \bS_{t,d}^*(\bmu)^{-1} \bl \\
&-& \bl^\top \bS_{t,d}^*(\bmu)^{-1} (\bmu-r_{f,t+1})(\bmu-r_{f,t+1})^\top \bS_{t,d}^*(\bmu)^{-1} \bl \Big)^{1/2} \bz_0,
\end{eqnarray*}
where $\eta  \sim \chi^2_{n}$, $\bz_0 \sim \mathcal N _p (\mathbf 0 , \mathbf I_p)$, and $\bmu | \bx \sim t_k\left(n-k,\overline{\bx}_{t,d},\bS_{t,d}/(n(n-k))\right)$, and \begin{eqnarray*}
\bl^\top \bw_t &\stackrel{d}{=}& C_t \eta \bl^\top \bS_{t,c}^*(\bmu)^{-1} (\bmu-r_{f,t+1})
+ C_t \sqrt{ \eta } \Big((\bmu-r_{f,t+1})^\top \bS_{t,c}^*(\bmu)^{-1} (\bmu-r_{f,t+1}) \cdot \bl^\top \bS_{t,c}^*(\bmu)^{-1} \bl \\
&-& \bl^\top\bS_{t,c}^*(\bmu)^{-1} (\bmu-r_{f,t+1})(\bmu-r_{f,t+1})^\top \bS_{t,c}^*(\bmu)^{-1} \bl \Big)^{1/2} \bz_0,
\end{eqnarray*}
where $\eta  \sim \chi^2_{n+d_0-k}$, $\bz_0 \sim \mathcal N _p (\mathbf 0 , \mathbf I_p)$, and $\bmu | \bx \sim t_k\left(n+d_0-2k,\overline{\bx}_{t,c},\bS_{t,c}/((n+r_0)(n+d_0-2k))\right)$.

Moreover, since
\[
\sqrt{n}
\left(
\left(
  \begin{array}{c}
    \eta /n \\
    \mathbf z_0/\sqrt{n} \\
    \bmu  \\
  \end{array}
\right)
-\left(
  \begin{array}{c}
    1 \\
    \mathbf 0 \\
    \overline{\bx}_{t,d}  \\
  \end{array}
\right)
\right)
\stackrel{d}{\longrightarrow} \mathcal{N}\left(\mathbf{0},
\left(
  \begin{array}{ccc}
    2 & \mathbf 0 & \mathbf 0 \\
    \mathbf 0 & \mathbf I_p & \mathbf 0 \\
    \mathbf 0 & \mathbf 0 & \breve{\bS}_{t}\\
  \end{array}
\right)
\right)
\]
and
\[
\sqrt{n}
\left(
\left(
  \begin{array}{c}
    \eta/n \\
    \mathbf z_0/\sqrt{n} \\
    \bmu \\
  \end{array}
\right)
-\left(
  \begin{array}{c}
    1 \\
    \mathbf 0 \\
    \overline{\bx}_{t,c}  \\
  \end{array}
\right)
\right)
\stackrel{d}{\longrightarrow} \mathcal{N}\left(\mathbf{0},
\left(
  \begin{array}{ccc}
    2 & \mathbf 0 & \mathbf 0 \\
    \mathbf 0 & \mathbf I_p & \mathbf 0 \\
    \mathbf 0 & \mathbf 0 & \breve{\bS}_{t} \\
  \end{array}
\right)
\right)\]
as $n \longrightarrow \infty$ as well as
\[\lim_{n \longrightarrow \infty} \overline{\bx}_{t,c}=\breve{\bx}_{t}=\lim_{n \longrightarrow \infty} \overline{\bx}_{t,d}\]
and
\[\lim_{n \longrightarrow \infty} \frac{\bS_{t,c}}{n+r_0}=\breve{\bS}_t=\lim_{n \longrightarrow \infty}\frac{\bS_{t,d}}{n-1},\]
the application of the delta method (c.f., \cite[Theorem 3.7]{dasGupta}) proves that
\[\sqrt{n}(\bl^\top\bw_{t}-\bl^\top\hat{\bw}_{t})|\bx_{t,n} \stackrel{d.}{\longrightarrow} \mathcal{N}_k(\mathbf{0},f_d)\]
and
\[\sqrt{n}(\bl^\top\bw_{t}-\bl^\top \hat{\bw}_{t})|\bx_{t,n} \stackrel{d.}{\longrightarrow} \mathcal{N}_k(\mathbf{0},f_c),\]
as $n \longrightarrow \infty$ under the diffuse prior and the conjugate prior, respectively.

Finally, the results of Theorem \ref{th4} yield
\begin{eqnarray*}
f_d&=&\lim_{n \longrightarrow \infty} \mathbb{V}ar(\sqrt{n}\bl^\top\bw_{t})
= \lim_{n \longrightarrow \infty} \bl^\top\Bigg\{C_t^2\Bigg( n(n-1)\bS_{t,d}^{-1} (\overline{\bx}_{t,d}-r_{f,t+1}\bOne)(\overline{\bx}_{t,d}-r_{f,t+1}\bOne)^\top\bS_{t,d}^{-1}\\
&+& \left(\frac{n^2+k-2}{n(n+2)}+\frac{k-1}{k}b_d\right)\bS_{t,d}^{-1}\Bigg) \Bigg\}\bl\\
&=& \bl^\top\Bigg\{C_t^2\Bigg[\breve{\bS}_{t}^{-1} (\breve{\bx}_{t}-r_{f,t+1}\bOne)(\breve{\bx}_{t}-r_{f,t+1}\bOne)^\top\breve{\bS}_{t}^{-1}\\
&+& \left(1+\frac{k-1}{k} (\breve{\bx}_{t}-r_{f,t+1}\bOne)^\top \breve{\bS}_{t}^{-1} (\breve{\bx}_{t}-r_{f,t+1}\bOne)  \right)\breve{\bS}_{t}^{-1}\Bigg] \Bigg\}\bl
\end{eqnarray*}
and, similarly,
\begin{eqnarray*}
f_c&=& \bl^\top\Bigg\{C_t^2\Bigg[\breve{\bS}_{t}^{-1} (\breve{\bx}_{t}-r_{f,t+1}\bOne)(\breve{\bx}_{t}-r_{f,t+1}\bOne)^\top\breve{\bS}_{t}^{-1}\\
&+& \left(1+\frac{k-1}{k} (\breve{\bx}_{t}-r_{f,t+1}\bOne)^\top \breve{\bS}_{t}^{-1} (\breve{\bx}_{t}-r_{f,t+1}\bOne)  \right)\breve{\bS}_{t}^{-1}\Bigg] \Bigg\}\bl=f_d.
\end{eqnarray*}
Since, for each $\bl$ the linear combination $\bl^\top\bw_t$ is asymptotically normally distributed, then we also get that the vector of weights $\bw_t$ is asymptotically normal.
\end{proof}

\begin{proof}[Proof of Theorem \ref{th6}]
Since $\bx_{t+1}|\bmu,\bSigma \sim \mathcal{N}_k(\bmu,\bSigma)$ and it is conditionally independent of $\bx_{t,n}$, we get
\begin{eqnarray*}
\widehat{W}_{t+1}|\bmu,\bSigma,\bx_{t,n} &\sim& \mathcal{N}(W_{t}(1+r_{f,t+1} + \bv_{t}^\top (\bmu-r_{f,t+1})),W^2_t \bv_{t}^\top \bSigma \bv_t).
\end{eqnarray*}

\begin{enumerate}[(a)]
\item In the case of the diffuse prior, we observe that
\begin{equation}\label{th6_eq1}
\frac{\bv_{t}^\top \bSigma \bv_t}{\bv_{t}^\top \bS_{t,d}(\bmu)^* \bv_t} \stackrel{d}{=} \frac{1}{\xi},
\end{equation}
where $\xi\sim \chi^2_{n-k+1}$ and is independent of $\bmu$ (see, e.g., Theorem 3.2.13 in \cite{Muirhead1982}). Then the stochastic representation of $\widehat{W}_{t+1}$ is given by
\begin{eqnarray*}
\widehat{W}_{t+1} &\stackrel{d}{=}& W_t\left(1+r_{f,t+1} + \bv_{t}^\top (\bmu-r_{f,t+1})+ \frac{\sqrt{\bv_{t}^\top \bS_{t,d}(\bmu)^* \bv_t}}{\sqrt{n-k+1}}t_2\right)\,,
\end{eqnarray*}
where $t_2\sim t_1(n-k+1,0,1)$ is independent of $\bmu$. Finally, from the properties of the multivariate $t$-distribution, we obtain
\[\bv_{t}^\top (\bmu-\overline{\bx}_{t,d}) \sim t_1\left(n-k,0,\frac{\bv_{t}^\top \bS_{t,d} \bv_t}{n(n-k)} \right)\,,\]
which leads to
\begin{eqnarray*}
\widehat{W}_{t+1} &\stackrel{d}{=}& W_t\Bigg(1+r_{f,t+1} + \bv_{t}^\top (\overline{\bx}_{t,d}-r_{f,t+1})\\
&+&\sqrt{\bv_{t}^\top \bS_{t,d} \bv_t}\left(\frac{t_1}{\sqrt{n(n-k)}}+ \sqrt{1+\frac{t_1^2}{n-k}}\frac{t_2}{\sqrt{n-k+1}}\right)\Bigg)\,,
\end{eqnarray*}
where $t_1$ and $t_2$ are independent with $t_1\sim t_{n-k}$ and $t_2\sim t_{n-k+1}$.

\item Similarly, for the conjugate prior, it holds that
\begin{equation}\label{th6_eq1}
\frac{\bv_{t}^\top \bSigma \bv_t}{\bv_{t}^\top \bS_{t,c}(\bmu)^* \bv_t} \stackrel{d}{=} \frac{1}{\xi},
\end{equation}
where $\xi\sim \chi^2_{n+d_0-2k+1}$ and is independent of $\bmu$. Then the stochastic representation of $\widehat{W}_{t+1}$ is given by
\begin{eqnarray*}
\widehat{W}_{t+1} &\stackrel{d}{=}& W_t\left(1+r_{f,t+1} + \bv_{t}^\top (\bmu-r_{f,t+1})+ \frac{\sqrt{\bv_{t}^\top \bS_{t,c}(\bmu)^* \bv_t}}{\sqrt{n+d_0-2k+1}}t_2\right)\,,
\end{eqnarray*}
where $t_2\sim t_{n+d_0-2k+1}$ is independent of $\bmu$. From the properties of the multivariate $t$-distribution, we get
\[\bv_{t}^\top (\bmu-\overline{\bx}_{t,c}) \sim t_1\left(n+d_0-2k,0,\frac{\bv_{t}^\top \bS_{t,c} \bv_t}{(n+r_0)(n+d_0-2k)} \right)\,,\]
which leads to
\begin{eqnarray*}
\widehat{W}_{t+1} &\stackrel{d}{=}& W_t\Bigg(1+r_{f,t+1} + \bv_{t}^\top (\overline{\bx}_{t,c}-r_{f,t+1})\\
&+&\sqrt{\bv_{t}^\top \bS_{t,c} \bv_t}\left(\frac{t_1}{\sqrt{(n+r_0)(n+d_0-2k)}}+ \sqrt{1+\frac{t_1^2}{n+d_0-2k}}\frac{t_2}{\sqrt{n+d_0-2k+1}}\right)\Bigg)\,,
\end{eqnarray*}
where $t_1$ and $t_2$ are independent with $t_1\sim t_{n+d_0-2k}$ and $t_2\sim t_{n+d_0-2k+1}$.
\end{enumerate}
\end{proof}

\subsection{Empirical Bayes estimation of the hyperparameters in the conjugate prior}

In this section, we derive the empirical Bayes estimates for the hyperparameters of the conjugate prior $\bm_0$ and $\bS_0$. Given the sample $\bx_{\tau,n}$ the empirical Bayes estimates for $\bm_0$ and $\bS_0$  are obtained by maximizing (see, e.g., \cite{carlin2000bayes})
\begin{equation}\label{g}
g(\bm_0,\bS_0)=\int_{\bmu} \int_{\bSigma} L(\bx_{t,n}|\bmu,\bSigma) \pi(\bmu,\bSigma) \mbox{d} \bSigma \mbox{d} \bmu
\end{equation}
with respect to $\bm_0$ and $\bS_0$.

First, we calculate the integral in \eqref{g}, ignoring the terms which do not depend on $\bm_0$ and $\bS_0$, to get
\begin{eqnarray*}
g(\bm_0,\bS_0)&\propto& \int_{\bmu} \int_{\bSigma} L(\bx_{t,n}|\bmu,\bSigma) \pi(\bmu,\bSigma) \mbox{d} \bSigma \mbox{d} \bmu\\
&\propto& \int_{\bmu} \int_{\bSigma} |\bSigma|^{-n/2} \exp \left\{-\frac{n}{2}(\bar\bx_{\tau}-\bmu)^\top \bSigma^{-1} (\bar\bx_{\tau}-\bmu) - \frac{n-1}{2} tr(\bS_{\tau} \bSigma^{-1})\right\} \\
&\times&|\bSigma|^{-1/2} \exp \left\{-\frac{r_0}{2}(\bmu-\bm_0)^\top \bSigma^{-1} (\bmu-\bm_0)-\right\}\\
&\times&|\bSigma|^{-d_0/2}|\bS_0|^{(d_0-k-1)/2} \exp\left\{ - \frac{1}{2} tr(\bS_{0} \bSigma^{-1})\right\} \mbox{d} \bSigma \mbox{d} \bmu\\
&=& |\bS_0|^{(d_0-k-1)/2} \int_{\bmu} \int_{\bSigma} |\bSigma|^{-(n+d_0+1)/2}\exp \left\{-\frac{1}{2}tr\left(\bSigma^{-1}\bV_{\tau}(\bmu;\bm_0,\bS_0)\right) \right\} \mbox{d} \bSigma \mbox{d} \bmu\\
&\propto& |\bS_0|^{(d_0-k-1)/2} \int_{\bmu} |\bV_{\tau}(\bmu;\bm_0,\bS_0)|^{-(n+d_0-k)/2} \mbox{d} \bmu\,,
\end{eqnarray*}
where the last identity is obtained by recognizing that under the integral with respect to $\bSigma$ we have a kernel of the density function of $\mathcal{IW}_k(n+d_0+1,\bV_{\tau}(\bmu;\bm_0,\bS_0))$ with $\bar{\by}_{\tau}(\bm_0)=(n \bar{\bx}_{\tau}+r_0\bm_0)/(n+r_0)$ and
\begin{eqnarray*}
&&\bV_{\tau}(\bmu;\bm_0,\bS_0)=\bS_{0} +(n-1)\bS_{\tau}+r_0(\bmu-\bm_0)(\bmu-\bm_0)^\top +n(\bar\bx_{\tau}-\bmu)(\bar\bx_{\tau}-\bmu)^\top\\
&=&\bS_{0} +(n-1)\bS_{\tau}+nr_0\frac{(\bm_0-\bar{\by}_{\tau}(\bm_0))(\bm_0-\bar{\by}_{\tau}(\bm_0))^\top}{n+r_0}+(n+r_0) (\bmu-\bar{\by}_{\tau}((\bm_0)))(\bmu-\bar{\by}_{\tau}(\bm_0))^\top
\,.
\end{eqnarray*}

Let $\widetilde{\bV}_{\tau}(\bm_0,\bS_0)=\bS_{0} +(n-1)\bS_{\tau}+nr_0(\bm_0-\bar{\by}_{\tau}(\bm_0))(\bm_0-\bar{\by}_{\tau}(\bm_0))^\top/(n+r_0)$. The application of Sylvester's determinant theorem leads to
\begin{eqnarray*}
|\bV_{\tau}(\bmu;\bm_0,\bS_0)|=|\widetilde{\bV}_{\tau}(\bm_0,\bS_0)|(1+(n+r_0) (\bmu-\bar{\by}_{\tau}(\bm_0))^\top\widetilde{\bV}_{\tau}(\bm_0,\bS_0)^{-1}(\bmu-\bar{\by}_{\tau}(\bm_0)))
\end{eqnarray*}
and, hence,
\begin{eqnarray*}
g(\bm_0,\bS_0)&\propto& |\bS_0|^{(d_0-k-1)/2} \int_{\bmu} |\bV_{\tau}(\bmu;\bm_0,\bS_0)|^{-(n+d_0-k)/2} \mbox{d} \bmu\\
&\propto& |\bS_0|^{(d_0-k-1)/2} |\widetilde{\bV}_{\tau}(\bm_0,\bS_0)|^{-(n+d_0-k)/2}\\
&\times& \int_{\bmu} (1+(n+r_0) (\bmu-\bar{\by}_{\tau}(\bm_0))^\top\widetilde{\bV}_{\tau}(\bm_0,\bS_0)^{-1}(\bmu-\bar{\by}_{\tau}(\bm_0)))^{-(n+d_0-k)/2} \mbox{d} \bmu\\
&\propto& |\bS_0|^{(d_0-k-1)/2} |\widetilde{\bV}_{\tau}(\bm_0,\bS_0)|^{-(n+d_0-k-1)/2} \\
&=& |\bS_0|^{(d_0-k-1)/2} |\bS_{0} +(n-1)\bS_{\tau}|^{-(n+d_0-k-1)/2}\\
&\times& \left(1+nr_0(\bm_0-\bar{\by}_{\tau}(\bm_0))^\top(\bS_{0} +(n-1)\bS_{\tau})^{-1}
(\bm_0-\bar{\by}_{\tau}(\bm_0))/(n+r_0)\right)^{-(n+d_0-k-1)/2}\,,
\end{eqnarray*}
where we use Sylvester's determinant theorem for the second time. From the last line, we conclude that $g(\bm_0,\bS_0)$ is maximized with respect to $\bm_0$ at $\hat{\bm}_0$ satisfying $\bm_0=\bar{\by}_{\tau}(\bm_0)$ independently of $\bS_0$ leading to $\hat{\bm}_0=\bar{\bx}_{\tau}$.

Taking the logarithms of $g(\bm_0,\bS_0)$, calculating the matrix derivative with respect to $\bS_0$ which is then set to the zero matrix, and substituting $\bm_0$ by $\hat{\bm}_0$, we get the following matrix equation
\[\frac{d_0-k-1}{2}\bS_0^{-1}- \frac{n+d_0-k-1}{2} (\bS_{0} +(n-1)\bS_{\tau})^{-1}=\mathbf{O} \]
with the solution given by
\[\hat{\bS}_0=\frac{(d_0-k-1)(n-1)}{n}\bS_{\tau}.\]

\bibliographystyle{econometrica}
\bibliography{EXP-BA}

\end{document}